\setlist[enumerate,1]{label={\upshape(\arabic*)}}
\setlist[enumerate,2]{label={\upshape(\alph*)}}
\numberwithin{table}{section}
\tikzset{blackv/.style={circle,fill=black,inner sep=3pt,outer sep=3pt},
         whitev/.style={circle,fill=white,draw=black,inner sep=3pt,outer sep=3pt},
         blabel/.style={circle,draw=black,inner sep=1.5pt,outer sep=0pt},
         redv/.style={circle,fill=red,inner sep=3pt,outer sep=3pt},
         bluev/.style={circle,fill=blue,inner sep=3pt,outer sep=3pt},
         block/.style={draw,rectangle split,rectangle split horizontal,rectangle split parts=#1},
         symbol/.style={
           draw=none,
           every to/.append style={
             edge node={node [sloped, allow upside down, auto=false]{$#1$}}}}
}
\newcolumntype{C}{>{$}c<{$}}
\newcolumntype{x}[1]{>{\centering\arraybackslash\hspace{0pt}}m{#1}}
\newtheorem{theorem}{Theorem}[section]
\newtheorem{theoremi}{Theorem}
\newtheorem{corollary}[theorem]{Corollary}
\newtheorem*{lemma*}{Lemma}
\newtheorem*{theorem*}{Theorem}
\newtheorem{proposition}[theorem]{Proposition}
\newtheorem{definition-proposition}[theorem]{Definition-Proposition}
\theoremstyle{definition}
\newtheorem{definition}[theorem]{Definition}
\newtheorem{remark}[theorem]{Remark}
\newtheorem{example}[theorem]{Example}
\newtheorem*{ack}{Acknowledgments}
\newcommand{\al}{\alpha}
\newcommand{\be}{\beta}
\renewcommand{\AA}{\mathcal{A}}
\newcommand{\CC}{\mathcal{C}}
\newcommand{\DD}{\mathcal{D}}
\newcommand{\HH}{\mathcal{H}}
\newcommand{\TT}{\mathcal{T}}
\newcommand{\UU}{\mathcal{U}}
\newcommand{\VV}{\mathcal{V}}
\newcommand{\WW}{\mathcal{W}}
\newcommand{\Z}{\mathbb{Z}}
\newcommand{\RHom}{\mathbf{R}\strut\kern-.2em\operatorname{Hom}\nolimits}
\newcommand{\Image}{\operatorname{Im}\nolimits}
\newcommand{\Kernel}{\operatorname{Ker}\nolimits}
\newcommand{\Cokernel}{\operatorname{Coker}\nolimits}
\newcommand{\coker}{\Cokernel}
\newcommand{\im}{\Image}
\renewcommand{\ker}{\Kernel}
\newcommand{\se}{\subseteq}
\newcommand{\ot}{\leftarrow}
\DeclareMathOperator{\moduleCategory}{\mathsf{mod}} \renewcommand{\mod}{\moduleCategory}
\DeclareMathOperator{\tors}{\mathsf{tors}}
\DeclareMathOperator{\Hasse}{\mathsf{Hasse}}
\DeclareMathOperator{\Fac}{\mathsf{Fac}}
\newcommand{\iso}{\cong}
\newcommand{\sst}[1]{\substack{#1}}
\newcommand{\BB}{\mathcal{B}}
\newcommand{\EE}{\mathcal{E}}
\numberwithin{equation}{section}
\begin{document}
\title[Classifying $t$-structures via  ICE-closed subcategories]{Classifying $t$-structures via  ICE-closed subcategories and a lattice of torsion classes}

\author[A. Sakai]{Arashi Sakai}
\address{A. Sakai: Graduate School of Mathematics, Nagoya University, Chikusa-ku, Nagoya, 464-8602, Japan}
\email{m20019b@math.nagoya-u.ac.jp}

\subjclass[2020]{16G10, 18E10, 18G80}
\keywords{$t$-structure, torsion class, ICE-closed subcategory, ICE sequence}

\begin{abstract}
In a triangulated category equipped with a $t$-structure, we investigate a relation between ICE-closed (=Image-Cokernel-Extension-closed) subcategories of the heart of the $t$-structure and aisles in the triangulated categories. We introduce an ICE sequence, a sequence of ICE-closed subcategories satisfying a certain condition, and establish a bijection between ICE sequences and homology-determined preaisles. Moreover we give a sufficient condition that an ICE sequence induces a $t$-structure via the bijection. In the case of the bounded derived category $D^b(\mod\Lambda)$ of a $\tau$-tilting finite algebra $\Lambda$, we give a description of ICE sequences in $\mod\Lambda$ which induce bounded $t$-structures on $D^b(\mod\Lambda)$ from the viewpoint of a lattice consisting of torsion classes in $\mod\Lambda$.
\end{abstract}

\maketitle
\tableofcontents

\section{Introduction}

In representation theory of finite dimensional algebras, there are two important classes of subcategories, \emph{torsion classes} and \emph{wide subcategories}. The notion of a \emph{torsion pair} was introduced in \cite{Dic}, and is studied in relation with tilting or $\tau$-tilting theory \cite{BB,AIR,DIJ}. On the other hand, wide subcategories, abelian subcategories closed under extensions are investigated in connection with torsion classes and ring epimorphisms \cite{IT,MS,AP,AS}. 

Recently, in \cite{Eno}, torsion classes and wide subcategories are unified by the notion of \emph{ICE-closed subcategories}, subcategories closed under taking images, cokernels and extensions. In \cite{ES}, it is shown that ICE-closed subcategories of an abelian length category are precisely torsion classes in some wide subcategories of the abelian category. 

In the derived category of an abelian category, torsion pairs in the abelian category bijectively correspond to \emph{intermediate $t$-structures} \cite{HRS,BR,Woo}. Besides wide subcategories of the abelian category bijectively correspond to \emph{$H^0$-stable thick subcategories} \cite{Bru,ZC}. It is natural to ask what objects in the derived category correspond to ICE-closed subcategories of the abelian category.

In this paper, we observe the above correspondences via sequences of ICE-closed subcategories. The notion of preaisles, subcategories of a triangulated categories closed under extensions and positive shifts unifies aisles of $t$-structures and thick subcategories. We consider \emph{homology-determined} preaisles, a generalization of $H^0$-stable thick subcategories. In \cite{SR}, Stanley and van Roosmalen classify homology-determined preaisles by narrow sequences (Definition \ref{def:narseq}). We introduce the concept of an \emph{ICE sequence}, a sequence of ICE-closed subcategories of an abelian category satisfying some conditions (Definition \ref{def:ICEseq}). Then we show that ICE sequences coincide with narrow sequences in Proposition \ref{prop:char}, and establish the following correspondences.

\begin{theoremi}[Theorem \ref{thm:ICE}]\label{thm:1}
Let $\DD$ be a triangulated category and $(\DD^{\leq 0},\DD^{\geq 1})$ a $t$-structure on $\DD$ with the heart $\HH$. 
Then there exists a bijective correspondence between
\begin{enumerate}
\item the set of ICE sequences in $\HH$,
\item the set of homology-determined preaisles in $\DD$. 
\end{enumerate}
\end{theoremi}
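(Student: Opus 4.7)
The plan is to reduce Theorem~\ref{thm:1} to two building blocks that are already available: Proposition~\ref{prop:char}, which identifies ICE sequences in $\HH$ with narrow sequences in $\HH$, and the classification theorem of Stanley and van Roosmalen \cite{SR}, which gives a bijection between narrow sequences in $\HH$ and homology-determined preaisles in $\DD$. The theorem will then follow by composing these two correspondences.

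First I would recall the Stanley--van Roosmalen bijection and fix the two maps that will eventually realize the bijection in Theorem~\ref{thm:1}. To a homology-determined preaisle $\UU \subseteq \DD$ one associates the sequence $(\CC_n)_{n \in \Z}$ in $\HH$ whose $n$-th term records the objects of $\HH$ that appear as the $n$-th cohomology $H^n(X)$ of some $X \in \UU$. In the other direction, from a narrow sequence $(\CC_n)_{n \in \Z}$ one reconstructs a preaisle as the full subcategory of those $X \in \DD$ whose cohomologies satisfy $H^n(X) \in \CC_n$ for every $n \in \Z$. The content of \cite{SR} is precisely that these two assignments land in the right places and are mutually inverse. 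I would then verify that both maps respect the homology-determined condition on one side and the narrow-sequence axioms on the other, invoking \cite{SR} for the bulk of the verification.

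Next, using Proposition~\ref{prop:char}, I would replace ``narrow sequence'' by ``ICE sequence'' throughout: the two notions cut out exactly the same sequences of subcategories of $\HH$, so the Stanley--van Roosmalen bijection transports verbatim to a bijection between ICE sequences in $\HH$ and homology-determined preaisles in $\DD$. Since the two constituent results have already been established, what remains is essentially bookkeeping: writing the two maps in terms of ICE sequences and checking that they are still mutually inverse, which is automatic once Proposition~\ref{prop:char} is applied. The main obstacle, and the true technical heart of the argument, is therefore Proposition~\ref{prop:char}: translating the closure axioms (under images, cokernels, and extensions) that define ICE-closedness, together with the inter-level compatibility required of an ICE sequence, into the extension-closure and truncation-compatibility conditions that define a narrow sequence. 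Once that dictionary is in place, Theorem~\ref{thm:1} is a formal corollary.
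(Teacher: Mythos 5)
Your proposal matches the paper's own proof: Theorem~\ref{thm:ICE} is obtained there exactly by composing the Stanley--van Roosmalen bijection of Theorem~\ref{thm:SR} (narrow sequences in $\HH$ $\leftrightarrow$ homology-determined preaisles in $\DD$) with Proposition~\ref{prop:char} (ICE sequences $=$ narrow sequences). You have also correctly identified that the real technical content lives in Proposition~\ref{prop:char}, so nothing further is needed.
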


As a corollary of the above, we recover main results in \cite{Bru,ZC} (Corollary \ref{cor:H0}) and obtain an analogous result in \cite{HRS,BR,Woo} (Corollary \ref{cor:HRS}). 

Next we focus on aisles, equivalent to $t$-structures \cite{KV}. There are many results classifying t-structures on the bounded derived category, e.g. a finite dimensional hereditary algebra \cite{KV,SR,ST}, a finite dimensional algebra \cite{KY}, a commutative ring \cite{AJS} and coherent sheaves on the projective line \cite{GKR,SR}, and on the unbounded derived category of a ring \cite{AMV,AH}. 

We aim to classify homology-determined aisles. In Theorem \ref{thm:1}, a homology-determined aisle in $\DD$ corresponds to an ICE sequence of $\HH$ consisting of \emph{contravariantly finite} ICE-closed subcategories of $\HH$ (Proposition \ref{prop:nece}). However we do not know whether the converse holds or not. We assume the conditions that the $t$-structure $(\DD^{\leq 0},\DD^{\geq 1})$ is bounded and that the heart $\HH$ is contravariantly finite in $\DD$, which is studied in \cite{CPP}. For example, the condition is fulfilled where $\DD=D^b(\mod\Lambda)$ for a finite dimensional algebra $\Lambda$ and $(\DD^{\leq 0},\DD^{\geq 1})$ is the standard $t$-structure. Under the assumption, an ICE sequence which \emph{terminates to zero} induces an aisle (Proposition \ref{prop:term}). Then we obtain a classification of bounded $t$-structures on the bounded derived category $D^b(\mod\Lambda)$ of a finite dimensional algebra $\Lambda$ whose aisles are homology-determined.

\begin{theoremi}[Theorem \ref{thm:bdd}]\label{thm:2}
Let $\Lambda$ be a finite dimensional algebra.
Then there exists a bijective correspondense between
\begin{enumerate}
\item the set of contravariantly finite full ICE sequences in $\mod\Lambda$,
\item the set of bounded $t$-structures on $D^b(\mod\Lambda)$ whose aisles are homology-determined. 
\end{enumerate}
\end{theoremi}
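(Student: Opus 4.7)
The plan is to combine Theorem \ref{thm:ICE}, Proposition \ref{prop:nece}, and Proposition \ref{prop:term}, applied to the standard $t$-structure on $\DD := D^b(\mod\Lambda)$, whose heart is $\mod\Lambda$. The hypothesis of Proposition \ref{prop:term} is satisfied: the standard $t$-structure is bounded, and $\mod\Lambda$ is contravariantly finite in $D^b(\mod\Lambda)$ (every bounded complex admits a contravariantly finite approximation from $\mod\Lambda$ after truncating far enough). Thus Theorem \ref{thm:ICE} already supplies a bijection between ICE sequences in $\mod\Lambda$ and homology-determined preaisles in $\DD$; the task is to show that it restricts to the bijection in the statement.

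For the map from (1) to (2), I would start with a contravariantly finite full ICE sequence $\CC_\bullet$. Part of the fullness condition says that $\CC_\bullet$ terminates to zero, so Proposition \ref{prop:term} implies that the corresponding homology-determined preaisle $\UU \subseteq \DD$ is an aisle of a $t$-structure. The remaining half of fullness (that $\CC_i = \mod\Lambda$ for $i \ll 0$) should translate, via the description of $\UU$ in terms of cohomologies, into $\mod\Lambda[N] \subseteq \UU$ for some $N$; together with the termination condition this forces the induced $t$-structure to be bounded on $D^b(\mod\Lambda)$.

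For the inverse direction, let $\UU$ be the aisle of a bounded $t$-structure with $\UU$ homology-determined. By Proposition \ref{prop:nece}, the associated ICE sequence $\CC_\bullet$ is contravariantly finite. Boundedness of the $t$-structure combined with the homology-determined property of $\UU$ should translate into the fullness of $\CC_\bullet$: the existence of $N$ with $\mod\Lambda[N] \subseteq \UU$ gives $\CC_i = \mod\Lambda$ for $i \leq -N$, and the exclusion of $\mod\Lambda[M]$ from $\UU$ for $M$ sufficiently small forces $\CC_i = 0$ for $i \gg 0$.

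The main obstacle, as I see it, is making the precise dictionary between the two halves of ``full'' and two-sided boundedness of the $t$-structure, and checking that the restriction of the bijection from Theorem \ref{thm:ICE} really lands in bounded $t$-structures rather than in some wider class. Since under homology-determinedness an object lies in $\UU$ based only on its cohomologies, unpacking conditions of the form ``$X[n] \in \UU$ for all $X \in \mod\Lambda$'' into conditions on the individual $\CC_i$ should be formal; nevertheless, verifying both directions of this equivalence carefully is where the real work lies.
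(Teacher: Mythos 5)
Your proposal is correct and takes essentially the same route as the paper: it combines Theorem \ref{thm:ICE} with Propositions \ref{prop:nece} and \ref{prop:term} (packaged in the paper as Proposition \ref{prop:rb}, matching contravariantly finite ICE sequences terminating to zero with right bounded homology-determined aisles). The ``dictionary'' you flag as the remaining work --- between fullness of the ICE sequence and boundedness of the $t$-structure --- is exactly what the paper handles via Proposition \ref{prop:alg} (for an algebraic $t$-structure, boundedness is equivalent to two-sided boundedness of the aisle), which applies since the standard $t$-structure on $D^b(\mod\Lambda)$ is algebraic.
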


We give a description of full ICE sequences in $\mod\Lambda$ from the viewpoint of a lattice $\tors\Lambda$ consisting of torsion classes in $\mod\Lambda$. There are many studies of lattices of torsion classes \cite{AIR,DIRRT,Eno2,AP}. In \cite{AP}, Asai and Pfeifer introduce the notion of \emph{wide intervals} and \emph{meet intervals} and show that they are equivalent, see Proposition \ref{prop:torslattice} (1). We introduce a \emph{maximal meet interval} (Definition \ref{def:max}), which is defined by only a lattice-theoretical property of $\tors\Lambda$. We consider a \emph{decreasing sequence of maximal meet intervals} and obtain the following classification. 

\begin{theoremi}[Theorem \ref{thm:class}]\label{thm:3}
Let $\Lambda$ be a $\tau$-tilting finite algebra and $n$ a positive integer.
Then there are one-to-one correspondenses between
\begin{enumerate}
\item the set of $(n+1)$-intermediate $t$-structures on $D^b(\mod\Lambda)$ whose aisles are homology-determined,
\item the set of ICE sequence in $\mod\Lambda$ of length $n+1$,
\item the set of decreasing sequences of maximal meet intervals in $\tors\Lambda$ of length $n$.
\end{enumerate}
\end{theoremi}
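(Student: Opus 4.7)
The plan is to decompose the triple bijection into the two separate bijections $(1) \leftrightarrow (2)$ and $(2) \leftrightarrow (3)$; the first will be essentially a corollary of Theorem \ref{thm:bdd}, and the second will be the substantive new ingredient.

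For $(1) \leftrightarrow (2)$, I would invoke Theorem \ref{thm:bdd} to identify bounded $t$-structures on $D^b(\mod\Lambda)$ with homology-determined aisles with contravariantly finite full ICE sequences in $\mod\Lambda$. Two points then need verification. First, because $\Lambda$ is $\tau$-tilting finite, every torsion class in $\mod\Lambda$ is functorially finite, and combining this with the Enomoto--Sakai description \cite{ES} of ICE-closed subcategories as torsion classes in wide subcategories, every ICE-closed subcategory of $\mod\Lambda$ is automatically contravariantly finite. Hence the contravariant finiteness adjective is redundant in this setting. Second, one must track the length: an ICE sequence of length $n+1$ should produce a $t$-structure whose nontrivial cohomology is concentrated in exactly $n+1$ consecutive degrees, which is precisely the definition of an $(n+1)$-intermediate $t$-structure. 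Both directions of this correspondence then follow formally.

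For $(2) \leftrightarrow (3)$, the key input is the Asai--Pfeifer framework (Proposition \ref{prop:torslattice}) together with the Enomoto--Sakai theorem: each ICE-closed subcategory of $\mod\Lambda$ arises as a torsion class in some wide subcategory and, via the ``wide interval equals meet interval'' equivalence, corresponds to a meet interval in $\tors\Lambda$. I would then show that the meet intervals arising as entries of an ICE sequence are exactly the maximal ones in the sense of Definition \ref{def:max}, and that under this identification the containment and compatibility structure built into an ICE sequence $\mathcal{C}_0 \supseteq \mathcal{C}_1 \supseteq \cdots \supseteq \mathcal{C}_n$ translates, term by term, into the decrease condition on the associated sequence of $n$ maximal meet intervals. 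The length drops from $n+1$ to $n$ because one endpoint of the ICE sequence (the forced largest or smallest term) contributes no independent datum, so exactly $n$ intervals of data survive.

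The main obstacle is the second step. One must check that lattice-theoretic maximality of meet intervals in $\tors\Lambda$ picks out exactly the correct class of ICE-closed subcategories, and that the compatibility axiom between consecutive terms $\mathcal{C}_i \supseteq \mathcal{C}_{i+1}$ of an ICE sequence translates precisely into the decrease condition on the corresponding maximal meet intervals. In particular one has to rule out decreasing sequences of maximal meet intervals that do not arise from any ICE sequence and, conversely, derive lattice-theoretic maximality from the ICE-sequence axioms alone. I expect the bulk of the work to consist of careful bookkeeping inside the framework of \cite{AP}, matching each module-theoretic ICE condition with its shadow in the torsion-class lattice.
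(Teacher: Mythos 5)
Your proposal is correct and follows essentially the same route as the paper: the bijection $(1)\leftrightarrow(2)$ is Corollary \ref{cor:int} combined with the observation that $\tau$-tilting finiteness makes every ICE-closed subcategory contravariantly finite, and the bijection $(2)\leftrightarrow(3)$ is exactly the content of Theorem \ref{thm:wideint}, whose proof carries out the "bookkeeping" you describe via Propositions \ref{prop:meet}, \ref{prop:fullICE}, \ref{prop:con} and \ref{prop:dec} (the key identification being $\HH_{[\UU_k,\TT_k]}=\alpha(\TT_k\cap\UU_{k-1}^{\perp})$). The obstacle you flag is precisely what those propositions resolve, so your outline matches the paper's argument.
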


\medskip
\noindent
{\bf Organization.}
In Sect.\ref{sec:pre}, we collect basic definitions. In Sect.\ref{sec:nar}, we recall the classification of homology-determined preaisles via narrow sequences. In Sect.\ref{sec:ICE}, we introduce ICE sequences and give a proof of Theorem \ref{thm:1}. Moreover we consider restrictions of it to thick subcategories and $n$-intermediate preaisles. In Sect.\ref{sec:cla}, we give a proof of the main results Theorems \ref{thm:2} and \ref{thm:3} and show some examples. 

\medskip
\noindent
{\bf Conventions and notation.}
Throughout this paper, we denote by $\AA$ an abelian category.
We denote by $\DD$ a triangulated category with shift functor $[1]$.
We assume that all subcategories are full, additive, and closed under isomorphisms. For a subcategory $\CC$ of $\AA$, we put
\[
 {}^\perp\CC=\{X\in\AA\mid\AA(X,C)=0 \ \text{for any} \ C\in\CC\},
\]
\[
 \CC^{\perp}=\{X\in\AA\mid\AA(C,X)=0 \ \text{for any} \ C\in\CC\}.
\]

\section{Preliminaries}\label{sec:pre}

We start introducing basic definitions.

\begin{definition}\label{def:basicdef}
  Let $\CC$ be a subcategory of $\AA$.
  \begin{enumerate}
    \item $\CC$ is \emph{closed under extensions} if, for any short exact sequence 
    \[
    \begin{tikzcd}
      0 \rar & L \rar & M \rar & N \rar & 0
    \end{tikzcd}
    \]
    in $\AA$, we have that $L,N \in \CC$ implies $M \in \CC$.
    \item $\CC$ is \emph{closed under quotients in $\AA$} if, for every object $C \in \CC$, any quotients of $C$ in $\AA$ belong to $\CC$.
    \item $\CC$ is a \emph{torsion class in $\AA$} if $\CC$ is closed under extensions and quotients in $\AA$.
    \item $\CC$ is closed under \emph{images (resp. kernels, cokernels)} if, for every morphism $\varphi \colon C_1 \to C_2$ with $C_1, C_2 \in \CC$, we have $\im\varphi \in \CC$ (resp. $\ker\varphi\in\CC$, $\coker\varphi\in\CC$).
    \item $\CC$ is a \emph{wide subcategory of $\AA$} if $\CC$ is closed under extensions, kernels and cokernels.
    \item $\CC$ is an \emph{ICE-closed subcategory of $\AA$} if $\CC$ is closed under images, cokernels and extensions.
  \end{enumerate}
\end{definition}

Every wide subcategory $\WW$ of $\AA$ becomes an abelian category, and we always regard $\WW$ as such.
Both wide subcategories and torsion classes are ICE-closed subcategories. Moreover, every torsion class in a wide subcategory (viewed as an abelian category) is ICE-closed, see \cite[Lemma 2.2]{ES}. In \cite{IT}, Ingalls and Thomas introduced an operation $\alpha$ which associates to a torsion class a wide subcategory. In \cite{Eno}, the operation was generalized to ICE-closed subcategories. For the convenience of the reader, we give a direct proof that $\alpha\CC$ is a wide subcategory for any ICE-closed subcategory $\CC$ of $\AA$. This is the same argument of \cite{IT}.

\begin{proposition}\cite[Proposition 2.12]{IT}\cite[Proposition 4.2]{Eno}\label{prop:alpha}
Let $\CC$ be an ICE-closed subcategory of $\AA$. Define a subcategory of $\CC$ by
\[
 \alpha\CC=\{A\in\CC\mid {}^{\forall}(f\colon C\to A)\in\CC, \ \ker f\in\CC\}.
\]
Then the following statements hold.
\begin{enumerate}
\item Let $C$ be an object in $\alpha\CC$ and $X$ a subobject of $C$ which belongs to $\CC$. Then $X$ belongs to $\alpha\CC$.
\item $\alpha\CC$ is a wide subcategory of $\AA$. 
\end{enumerate}
\end{proposition}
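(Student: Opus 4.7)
The plan is to verify both claims by direct diagram-chasing, using only the fact that $\CC$ is closed under images, cokernels, and extensions, together with the defining property of $\alpha\CC$.

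For (1), I take an arbitrary morphism $g\colon Y\to X$ with $Y\in\CC$ and compose with the inclusion $\iota\colon X\hookrightarrow C$ to get $\iota g\colon Y\to C$. Since $Y\in\CC$ and $C\in\alpha\CC$, the kernel $\ker(\iota g)$ lies in $\CC$; because $\iota$ is monic, $\ker(\iota g)=\ker g$, which forces $X\in\alpha\CC$.

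For (2), I check closure under kernels, cokernels, and extensions separately.
\emph{Kernels:} for $f\colon A\to B$ with $A,B\in\alpha\CC$, the defining property of $\alpha\CC$ applied at $B$ (using $A\in\CC$) gives $\ker f\in\CC$. Since $\ker f$ is a $\CC$-subobject of $A\in\alpha\CC$, part (1) promotes it to $\alpha\CC$.
\emph{Cokernels:} for $f\colon A\to B$ with $A,B\in\alpha\CC$ we have $\coker f\in\CC$ by hypothesis. Given $g\colon Y\to\coker f$ with $Y\in\CC$, form the pullback $P=B\times_{\coker f}Y$. The pullback of $0\to\im f\to B\to\coker f\to 0$ along $g$ is a short exact sequence $0\to\im f\to P\to Y\to 0$; since $\im f\in\CC$ (images), $Y\in\CC$, and $\CC$ is extension-closed, $P\in\CC$. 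Applying the definition of $\alpha\CC$ at $B$ to the projection $P\to B$ gives $\ker(P\to B)\in\CC$, and the pullback identification gives $\ker(P\to B)\cong\ker g$.
\emph{Extensions:} for a short exact sequence $0\to A\to B\xrightarrow{\pi} C\to 0$ with $A,C\in\alpha\CC$, extension-closedness of $\CC$ gives $B\in\CC$. Given $g\colon D\to B$ with $D\in\CC$, the composite $\pi g\colon D\to C$ has $\ker(\pi g)\in\CC$ because $C\in\alpha\CC$; restricting $g$ to this kernel yields a morphism $\ker(\pi g)\to A$, whose kernel is precisely $\ker g$. Now $A\in\alpha\CC$ and $\ker(\pi g)\in\CC$, so $\ker g\in\CC$.

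I do not expect a serious conceptual obstacle; the argument is largely bookkeeping. The only step that requires some care is the cokernel case, where one must construct the pullback $P$, recognize the short exact sequence $0\to\im f\to P\to Y\to 0$ to locate $P$ in $\CC$, and then identify $\ker(P\to B)$ with $\ker g$.
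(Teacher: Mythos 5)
Your proposal is correct and follows essentially the same route as the paper: part (1) via composing with the monic inclusion, kernels via part (1), cokernels via the pullback of $0\to\im f\to Y\to\coker f\to 0$ producing an extension of $Y$ by $\im f$, and extensions via the kernel of the composite to the quotient term. The only cosmetic difference is that the paper records the two pullback arguments as explicit commutative diagrams, whereas you phrase them in words; the content is identical.
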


\begin{proof}
(1) Let $f\colon C'\to X$ be a morphism with $C'\in\CC$. Then $\ker f$ coincides with the kernel of the composition of $f$ and the monomorphism $X\to C$. By $C\in\alpha\CC$ and $C'\in\CC$, we have $\ker f\in\CC$ as desired. 

(2) We show that $\alpha\CC$ is closed under extensions. Let $0\to X\to Y\to Z\to 0$ be a short exact sequence in $\AA$ with $X,Z\in\alpha\CC$. Since $\CC$ is closed under extensions, we have $Y\in\CC$. Take a morphism $f\colon C\to Y$ with $C\in\CC$. We show $\ker f\in\CC$. Consider the following exact diagram:
  \[
  \begin{tikzcd}
    & 0 \dar & 0 \dar \\
    & \ker f \rar[equal] \dar & \ker f \dar \\
    0 \rar & K \rar\dar\ar[rd, phantom, "{\rm p.b.}"] & C \rar \dar["f"] & Z \dar[equal] & \ \\
    0 \rar & X \rar & Y \rar & Z \rar & 0
  \end{tikzcd}
  \]
where $K$ is the kernel of the morphism $C\to Z$ and the left lower square is a pullback diagram. Then $K$ belongs to $\CC$ by $Z\in\alpha\CC$. Since $X$ belongs to $\alpha\CC$, we have $\ker f\in\CC$ as desired.

We show that $\alpha\CC$ is closed under cokernels. Let $f\colon X\to Y$ be a morphism with $X,Y\in\alpha\CC$. Then we have $\coker f\in\CC$ since $\CC$ is closed under cokernels. Take a morphism $g\colon C\to \coker f$ with $C\in\CC$. We show $\ker g\in\CC$. Taking a pullback, we have the following exact diagram:
  \[
  \begin{tikzcd}
    & \ & 0 \dar & 0\dar \\
    & \ & \ker g \dar\rar[equal] & \ker g \dar \\
    0 \rar & \im f \rar\dar[equal] & V \rar\dar\ar[rd, phantom, "{\rm p.b.}"] & C \rar\dar["g"] & 0 \\
    0 \rar & \im f \rar & Y \rar & \coker f \rar & 0.
  \end{tikzcd}
  \]
Since $\CC$ is closed under images, $\im f$ belongs to $\CC$. Then the middle row implies $V\in\CC$ since $\CC$ is closed under extensions. Therefore $\ker g$ belongs to $\CC$ since $Y$ belongs to $\alpha\CC$. 

We show that $\alpha\CC$ is closed under kernels. Let $f\colon X\to Y$ be a morphism with $X,Y\in\alpha\CC$. Then we have $\ker f\in\CC$ since $Y$ belongs to $\alpha\CC$. Applying (1) to the morphism $\ker f\to X$, we have $\ker f\in\alpha\CC$ as desired. Thus $\alpha\CC$ is a wide subcategory of $\AA$.
\end{proof}

For subcategories $\UU$ and $\VV$ of $\DD$, we denote by $\UU*\VV$ the subcategory of $\DD$ consisting of objects $X$ such that there exists an exact triangle $U\to X\to V\to U[1]$ in $\DD$ with $U\in\UU$ and $V\in\VV$. A subcategory $\UU$ of $\DD$ is \emph{closed under extensions} if it satisfies $\UU*\UU\subseteq\UU$.

\begin{definition}\cite[D\'{e}finition 1.3.1]{BBD}
A pair of subcategories $(\DD^{\leq 0},\DD^{\geq 1})$ is a \emph{$t$-structure} on $\DD$ if it satisfies the following conditions:
\begin{enumerate}
\item $\DD(\DD^{\leq 0},\DD^{\geq 1})=0$.
\item $\DD=\DD^{\leq 0}*\DD^{\geq 1}$.
\item $\DD^{\leq 0}[1]\se\DD^{\leq 0}$.
\end{enumerate}
\end{definition}

We put $\DD^{\leq n}=\DD^{\leq 0}[-n]$ and $\DD^{\geq n+1}=\DD^{\geq 1}[-n]$. The subcategory $\HH=\DD^{\leq 0}\cap\DD^{\geq 0}$ is called the \emph{heart} of $(\DD^{\leq 0},\DD^{\geq 1})$, which is an abelian category. We denote by $H^0\colon\DD\to\HH$ the associated cohomological functor and put $H^k=H^0[k]$. We call $\DD^{\leq 0}$ an \emph{aisle} and $\DD^{\geq 1}$ a \emph{coaisle}. A $t$-structure $(\DD^{\leq 0},\DD^{\geq 1})$ is \emph{bounded} if it satisfies
\[
\bigcup_{n\in\Z}\DD^{\leq n}=\DD=\bigcup_{n\in\Z}\DD^{\geq n}
\]
and \emph{nondegenerate} if it satisfies
\[
\bigcap_{n\in\Z}\DD^{\leq n}=0=\bigcap_{n\in\Z}\DD^{\geq n}.
\]
It is easily checked that bounded $t$-structures are nondegenerate. A bounded $t$-structure is \emph{algebraic} if its heart is an abelian length category with finitely many simple objects up to isomorphisms. The following proposition is useful and shown straightforwardly.

\begin{proposition}\label{prop:decomp}
Let $X\in \DD^{\leq n}\cap \DD^{\geq m}$ with $m\leq n$. Then there exist exact triangles
\[
X_{i-1}\to X_i\to (H^iX)[-i]\to X_{i-1}[1]
\]
for $m\leq i\leq n$ where $X_n=X$ and $X_{m-1}=0$. 
\end{proposition}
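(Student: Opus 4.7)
The plan is to construct the filtration explicitly using the truncation functors associated with the $t$-structure $(\DD^{\leq 0},\DD^{\geq 1})$. Concretely, I would set $X_i := \tau^{\leq i} X$ for $m-1 \leq i \leq n$, where $\tau^{\leq i}$ is the right adjoint to the inclusion $\DD^{\leq i} \hookrightarrow \DD$.

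The first step is to verify the boundary values. Since $X \in \DD^{\leq n}$, the canonical morphism $\tau^{\leq n} X \to X$ is an isomorphism, so $X_n = X$. Since $X \in \DD^{\geq m}$, the truncation triangle
\[
\tau^{\leq m-1}X \longrightarrow X \longrightarrow \tau^{\geq m} X \longrightarrow \tau^{\leq m-1}X[1]
\]
has $\tau^{\geq m}X = X$ with the middle map equal to the identity, forcing $X_{m-1} = \tau^{\leq m-1} X = 0$.

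The second step is to produce the triangle for each $m \leq i \leq n$. Because $\tau^{\leq i-1}(\tau^{\leq i} X) \cong \tau^{\leq i-1} X$, the defining truncation triangle of $X_i = \tau^{\leq i} X$ at level $i-1$ reads
\[
\tau^{\leq i-1} X \longrightarrow \tau^{\leq i} X \longrightarrow \tau^{\geq i}(\tau^{\leq i} X) \longrightarrow \tau^{\leq i-1} X[1].
\]
The third term lies in $\DD^{\leq i} \cap \DD^{\geq i} = \HH[-i]$; shifting by $[i]$ yields $H^0(X[i]) = H^i X$ by the very definition of the cohomology functor. Hence $\tau^{\geq i}(\tau^{\leq i} X) \cong (H^i X)[-i]$, giving exactly the triangle claimed in the statement.

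I do not expect a genuine obstacle: every ingredient is standard from the $t$-structure formalism of \cite[1.3]{BBD}. The only point worth making explicit is the identification of the cone with $(H^i X)[-i]$, and this is immediate from the characterisation of $\HH[-i]$ as $\DD^{\leq i} \cap \DD^{\geq i}$ together with the definition $H^i = H^0 \circ [i]$.
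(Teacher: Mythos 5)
Your proof is correct and is exactly the standard truncation argument the paper has in mind; the paper itself omits the proof, remarking only that the proposition is ``shown straightforwardly.'' Both boundary identifications ($X_n=X$ from $X\in\DD^{\leq n}$, $X_{m-1}=0$ from $X\in\DD^{\geq m}$) and the key step $\tau^{\geq i}\tau^{\leq i}X\cong (H^iX)[-i]$ via $\DD^{\leq i}\cap\DD^{\geq i}=\HH[-i]$ are handled correctly.
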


Let $\CC$ be an additive category and $\BB$ a subcategory of $\CC$. A morphism $f\colon B\to X$ in $\CC$ is a \emph{right $\BB$-approximation ($\BB$-precover)} of $X$ if $B$ belongs to $\BB$ and every morphism $f'\colon B'\to X$ with $B'\in\BB$ factors through $f$. Dually, a \emph{left $\BB$-approximation ($\BB$-preenvelope)} is defined. A subcategory $\BB$ of $\CC$ is \emph{contravariantly finite (resp. covariantly finite)} if every $X\in\CC$ has a right (resp. left) $\BB$-approximation. The notion of contravariantly (resp. covariantly) finite is also called \emph{precovering (resp. preenveloping)}. The term contravariantly (resp. covariantly) finite is also referred to as \emph{precovering (resp. preenveloping)}. A subcategory of $\CC$ is \emph{functorially finite} if it is contravariantly finite and covariantly finite. 

\section{Narrow sequences and homology-determined preaisles}\label{sec:nar}

In this section, we recall the result \cite[Theorem 4.11]{SR}. It is stated that homology-determined preaisles in the bounded derived category of an abelian category are classified by narrow sequences in the abelian category. We generalize it to a triangulated category equipped with a $t$-structure (Theorem \ref{thm:SR}). The proof is dealt with a similar argument of \cite{SR}. For the convenience of the reader, we give the proof. In the rest of this paper, we fix a $t$-structure $(\DD^{\leq 0},\DD^{\geq 1})$ on $\DD$. We start by giving the definition of narrow sequences. 

\begin{definition}\cite[Definition 4.1]{SR}\label{def:narseq}
A sequence $\{\CC(k)\}_{k\in\Z}$ of subcategories of an abelian category $\AA$ satisfying $\CC(k+1)\subseteq\CC(k)$ for any $k\in\Z$ is a \emph{narrow sequence} if for any $k\in\Z$, the following holds: Let 
\begin{equation}\label{eq:narseq}
\begin{tikzcd}
  A \rar["a"] & B \rar["b"] & C \rar["c"] & D\rar["d"] & E
\end{tikzcd}
\end{equation}
be an exact sequence in $\AA$ with $A\in\CC(k-1)$ and $B, D\in\CC(k)$ and $E\in\CC(k+1)$. Then we have $C\in\CC(k)$. 
\end{definition}

\begin{remark}
In \cite[Lemma 4.2]{SR}, it is shown that for each $k$, the subcategory $\CC(k)$ is a \emph{narrow subcategory}, subcategory closed under extensions and cokernels. Moreover it is stated in \cite[Corollary 3.3]{SR} that narrow subcategories of a hereditary abelian category are closed under images, hence are ICE-closed subcategories. In Proposition \ref{prop:ICEseq}, we show that $\CC(k)$ is an ICE-closed subcategory without the assumption that $\AA$ is hereditary.
\end{remark}

The following statement gives a characterization of a narrow sequence.

\begin{proposition}\label{prop:narrow}
Let $\{\CC(k)\}_{k\in\Z}$ be a sequence of subcategories of $\AA$ satisfying $\CC(k+1)\subseteq\CC(k)$ for any $k\in\Z$. Then $\{\CC(k)\}_{k\in\Z}$ is a narrow sequence if and only if it satisfies the following:
\begin{enumerate}
\item $\CC(k)$ is closed under extensions in $\AA$ for any $k\in\Z$. 
\item For any $k\in\Z$ and any morphism $f\colon A\to B$ with $A\in\CC(k)$ and $B\in\CC(k+1)$, we have $\ker f\in\CC(k)$ and $\coker f\in\CC(k+1)$.
\end{enumerate}
\end{proposition}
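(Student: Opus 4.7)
The plan is to prove both implications by working with suitable five-term exact sequences of the form (\ref{eq:narseq}).

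For the ``only if'' direction, assume $\{\CC(k)\}_{k\in\Z}$ is a narrow sequence. To derive (1), I would take a short exact sequence $0\to L\to M\to N\to 0$ with $L,N\in\CC(k)$ and pad it to a five-term sequence by placing $0\in\CC(k-1)$ on the left and $0\in\CC(k+1)$ on the right; the narrow condition at level $k$ immediately forces $M\in\CC(k)$. For the kernel half of (2), given $f\colon X\to Y$ with $X\in\CC(k)$ and $Y\in\CC(k+1)$, I would apply the narrow condition at level $k$ to the exact sequence $0\to 0\to\ker f\to X\to Y$. Dually, the narrow condition at level $k+1$ applied to $X\to Y\to\coker f\to 0\to 0$ yields $\coker f\in\CC(k+1)$.

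For the ``if'' direction, assume (1) and (2) and take an exact sequence $A\xrightarrow{a}B\xrightarrow{b}C\xrightarrow{c}D\xrightarrow{d}E$ as in (\ref{eq:narseq}). Exactness at $C$ yields the short exact sequence
\[
 0\to\im b\to C\to\im c\to 0.
\]
To control its endpoints, note that $\im b=\coker(A\to B)$ since $\ker b=\im a$ by exactness at $B$; applying (2) to the morphism $A\to B$ (this uses (2) at level $k-1$, with domain in $\CC(k-1)$ and codomain in $\CC(k)$) yields $\im b\in\CC(k)$. Similarly $\im c=\ker(D\to E)$ by exactness at $D$, and (2) applied at level $k$ to $D\to E$ yields $\im c\in\CC(k)$. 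Finally (1) applied to the short exact sequence above gives $C\in\CC(k)$.

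The only delicate point is keeping track of the index shift when invoking (2): it is used once at level $k-1$ and once at level $k$. Beyond this bookkeeping the argument is formal manipulation of short exact sequences, so I do not expect any essential obstacle.
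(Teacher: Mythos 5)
Your proposal is correct and follows essentially the same route as the paper: the ``only if'' direction by specializing the five-term sequence (setting $A=E=0$ for extension-closure and $A=B=0$, resp.\ $D=E=0$, for kernels and cokernels), and the ``if'' direction by splicing $C$ into the short exact sequence $0\to\coker a\to C\to\ker d\to 0$ and applying (2) at levels $k-1$ and $k$ followed by (1). The index bookkeeping you flag is handled exactly as you describe.
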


\begin{proof}
Suppose that $\{\CC(k)\}_{k\in\Z}$ is a narrow sequence. Considering (\ref{eq:narseq}) with $A=E=0$, we have that $\CC(k)$ is closed under extensions. Let $f\colon X\to Y$ be a morphism with $X\in\CC(k)$ and $Y\in\CC(k+1)$. By considering (\ref{eq:narseq}) with $d=f$ and $A=B=0$, we have $\ker f\in\CC(k)$. Similarly, it is shown that $\coker f$ belongs to $\CC(k+1)$. 

Conversely, suppose that $\{\CC(k)\}_{k\in\Z}$ satisfies (1) and (2). Consider the sequence (\ref{eq:narseq}). By (2), we have that $\coker a$ and $\ker d$ belong to $\CC(k)$. Consider the following short exact sequence: 
\[
\begin{tikzcd}
  0 \rar & \coker a \rar & C \rar & \ker d\rar & 0.
\end{tikzcd}
\]
By (1), we have $C\in\CC(k)$ as desired.
\end{proof}

Note that the condition {\rm (2)} appears in \cite[Proposition 5.1]{AH}. A subcategory $\UU$ of $\DD$ is a \emph{preaisle} if it is closed under extensions and positive shifts in $\DD$. 

\begin{definition}\cite[Definition 4.5]{SR}
A subcategory $\UU$ of $\DD$ is \emph{homology-determined} if for any $X\in\DD$, we have that $X\in\UU$ if and only if $H^kX\in \UU[k] \ \text{for any} \ k\in\Z$. 
\end{definition}

Note that the above definition depends on the fixed $t$-structure $(\DD^{\leq 0}, \DD^{\geq 1})$. It is easy to check that if $\UU$ is homology-determined, then so is $\UU[k]$ for any $k\in\Z$.

\begin{remark}\label{rem:homdet}
\begin{enumerate}
\item The term homology-determined is also called \emph{determined on cohomology} in \cite{AH}. 
\item If $(\DD^{\leq 0},\DD^{\geq 1})$ is bounded, then a subcategory $\UU$ of $\DD$ closed under extensions is homology-determined if and only if it satisfies the following condition: For any $X\in\UU$ and any $k\in\Z$, it holds $H^kX\in\UU[k]$. This follows from Proposition \ref{prop:decomp}. 
\item In the bounded derived category of a hereditary abelian category with the standard $t$-structure, any subcategory closed under extensions and direct summands is homology-determined. 
\item In general, a coaisle is not always homology-determined even if the corresponding aisle is homology-determined, see Remark \ref{rem:coaisle}.
\end{enumerate}
\end{remark}

\begin{proposition}\label{prop:homdet}
Let $\UU$ be a homology-determined preaisle of $\DD$. Then $H^k\UU=\UU[k]\cap\HH$ holds. In particular, $H^k\UU$ is closed under extensions in $\HH$.
\end{proposition}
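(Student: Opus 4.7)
The plan is to prove the equality by double inclusion and then deduce the closure under extensions from the fact that $\UU[k]$ is itself closed under extensions.

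For the inclusion $H^k\UU \subseteq \UU[k]\cap\HH$, I would start with an arbitrary $X \in \UU$ and apply the homology-determined hypothesis directly: this forces $H^k X \in \UU[k]$, while $H^k X \in \HH$ holds automatically since $H^k$ takes values in $\HH$. So every object of $H^k\UU$ sits in $\UU[k]\cap\HH$.

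For the reverse inclusion, I would take $Y \in \UU[k]\cap\HH$ and simply produce a preimage under $H^k$. The natural candidate is $Y[-k]$, which lies in $\UU$ by definition of $\UU[k]$, and I would observe that $H^k(Y[-k]) = H^0(Y) = Y$ because $Y \in \HH$. This exhibits $Y$ as $H^k$ of an object of $\UU$, completing the equality. No subtlety is needed here beyond unwinding the shift convention.

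For the second statement, given a short exact sequence $0 \to A \to B \to C \to 0$ in $\HH$ with $A, C \in H^k\UU$, I would invoke the standard fact that this sequence comes from an exact triangle $A \to B \to C \to A[1]$ in $\DD$. Since $\UU$ is closed under extensions and the shift functor is a triangle equivalence, $\UU[k]$ is also closed under extensions; thus $B \in \UU[k]$. Combined with $B \in \HH$ and the equality already proved, this gives $B \in \UU[k]\cap\HH = H^k\UU$, as desired.

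The only mild obstacle is bookkeeping of the shift: one has to remember that $H^k = H^0[k]$ in the convention of the paper, so that $H^k(Y[-k]) = H^0(Y) = Y$ for $Y \in \HH$. Beyond this, the argument is essentially formal and relies only on the definitions of homology-determined preaisle and of the $t$-structure, together with the translation between short exact sequences in $\HH$ and exact triangles in $\DD$.
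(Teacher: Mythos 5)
Your proof is correct and follows essentially the same route as the paper: the first inclusion from the homology-determined property, the reverse inclusion by exhibiting $Y[-k]\in\UU$ as a preimage (the paper writes the same object as $U$ with $X\iso U[k]$), and extension-closure from the fact that $\UU[k]\cap\HH$ is an intersection of extension-closed subcategories of $\DD$. No issues.
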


\begin{proof}
We show $H^k\UU\se\UU[k]\cap\HH$. Clearly $H^k\UU\se\HH$ holds. Since $\UU$ is homology-determined, we have $H^k\UU\se\UU[k]$. Then we have $H^k\UU\se\UU[k]\cap\HH$. We show the converse. Let $X$ be an object in $\UU[k]\cap\HH$. Then there is an object $U$ of $\UU$ such that $X\iso U[k]$. Since $X\in\HH$, we have $H^0X\iso X$. Thus we have $X\iso H^0(U[k])=H^kU\in H^k\UU$ as desired. Since $\UU$ and $\HH$ are closed under extensions in $\DD$, so is $H^k\UU$. Thus $H^k\UU$ is closed under extensions in $\HH$. 
\end{proof}

A narrow sequence in $\HH$ induces a homology-determined preaisle in $\DD$ as follows:

\begin{proposition}\cite[Proposition 4.10]{SR}\label{prop:nartoaisle}
Let $\CC=\{\CC(k)\}_{k\in\Z}$ be a narrow sequence in $\HH$. Define a subcategory $\theta(\CC)$ of $\DD$ by
\[
\theta(\CC)=\{X\in \DD\mid H^kX\in\CC(k) \ \text{for any} \ k\in\Z\}.
\]
Then $\theta(\CC)$ is a homology-determined preaisle in $\DD$.
\end{proposition}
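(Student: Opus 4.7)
The plan is to verify each of the three defining properties of a homology-determined preaisle directly from the definition of $\theta(\CC)$, using the decreasing and narrow-sequence hypotheses on $\CC$. The homology-determined condition will be essentially tautological after a small identification of $\theta(\CC)[k]\cap\HH$ with $\CC(k)$; closure under positive shifts will use monotonicity of the sequence; and the crux of the argument, closure under extensions, will apply the narrow sequence condition directly to the long exact cohomology sequence of an exact triangle.

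First I would observe that for $Y\in\HH$, one has $H^j(Y[-k])=0$ for $j\neq k$ and $H^k(Y[-k])\iso Y$, so $Y[-k]\in\theta(\CC)$ if and only if $Y\in\CC(k)$. This gives $\theta(\CC)[k]\cap\HH=\CC(k)$, and therefore for any $X\in\DD$ the condition ``$H^kX\in\theta(\CC)[k]$ for all $k$'' is equivalent to ``$H^kX\in\CC(k)$ for all $k$''. Since the latter is the definition of $X\in\theta(\CC)$, this is the homology-determined property. For positive-shift closure, take $X\in\theta(\CC)$; then $H^k(X[1])=H^{k+1}X\in\CC(k+1)\se\CC(k)$ by the assumption $\CC(k+1)\se\CC(k)$, so $X[1]\in\theta(\CC)$.

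For extension-closure, take an exact triangle $X\to Y\to Z\to X[1]$ in $\DD$ with $X,Z\in\theta(\CC)$. Applying the cohomological functor $H^0$ yields, for each $k\in\Z$, a five-term exact sequence in $\HH$
\[
H^{k-1}Z\to H^kX\to H^kY\to H^kZ\to H^{k+1}X,
\]
in which, by hypothesis, $H^{k-1}Z\in\CC(k-1)$, $H^kX,H^kZ\in\CC(k)$, and $H^{k+1}X\in\CC(k+1)$. The narrow sequence condition of Definition \ref{def:narseq} then forces $H^kY\in\CC(k)$. Since this holds for every $k$, we conclude $Y\in\theta(\CC)$. The main (and essentially only) obstacle is the bookkeeping of indices when matching the long exact sequence to the shape required by Definition \ref{def:narseq}; once this is set up correctly, the result is an immediate translation between the triangulated and abelian conditions.
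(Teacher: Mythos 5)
Your proof is correct and follows essentially the same route as the paper's: positive-shift closure from $\CC(k+1)\se\CC(k)$, extension closure by feeding the five-term cohomology sequence of a triangle into Definition \ref{def:narseq}, and the homology-determined property via the identification of $(H^kX)[-k]\in\theta(\CC)$ with $H^kX\in\CC(k)$. No gaps.
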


\begin{proof}
(1) We show that $\theta(\CC)$ is closed under positive shifts. Let $X$ be an object of $\theta(\CC)$. Then $H^k(X[1])=H^{k+1}X\in\CC(k+1)\se\CC(k)$ holds for any $k\in\Z$. Hence $X[1]$ belongs to $\theta(\CC)$. We show that $\theta(\CC)$ is closed under extensions. Let $X\to Y\to Z\to X[1]$ be an exact triangle in $\DD$ with $X,Z\in\theta(\CC)$. Taking cohomologies, we have the following exact sequence
\[
\begin{tikzcd}
  H^{k-1}Z \rar & H^{k}X \rar & H^{k}Y\rar & H^{k}Z\rar & H^{k+1}X
\end{tikzcd}
\]
in $\HH$ for any $k\in\Z$. By Definition \ref{def:narseq}, it is easy to see $H^kY\in\CC(k)$. Thus $Y$ belongs to $\theta(\CC)$. Therefore $\theta(\CC)$ is a preaisle. We show that $\theta(\CC)$ is homology-determined. Let $X$ be an object in $\DD$. It is enough to show that for any $k\in\Z$, we have $H^kX\in\CC(k)$ if and only if $(H^kX)[-k]\in\theta(\CC)$ holds. This follows easily from the definition of $\theta(\CC)$. 
\end{proof}

Conversely, a homology-determined preaisle in $\DD$ induces a narrow sequence in $\HH$ as follows:

\begin{proposition}\cite[Proposition 4.8]{SR}\label{prop:aisletonar}
Let $\UU$ be a homology-determined preaisle in $\DD$. Define a sequence $\mu(\UU)=\{\CC(k)\}_{k\in\Z}$ of subcategories of $\HH$ by $\CC(k)=H^{k}\UU$. Then $\mu(\UU)$ is a narrow sequence in $\HH$.
\end{proposition}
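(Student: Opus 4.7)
The plan is to verify the two conditions characterizing narrow sequences in Proposition \ref{prop:narrow} for the sequence $\mu(\UU)=\{H^k\UU\}_{k\in\Z}$, exploiting the identification $\CC(k):=H^k\UU=\UU[k]\cap\HH$ from Proposition \ref{prop:homdet}. The monotonicity $\CC(k+1)\subseteq\CC(k)$ is immediate: since $\UU$ is a preaisle, $\UU[k+1]\subseteq\UU[k]$, and intersecting with $\HH$ gives the desired inclusion. The extension closure of each $\CC(k)$ in $\HH$, which is condition (1) of Proposition \ref{prop:narrow}, is already recorded in Proposition \ref{prop:homdet}.

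The substantive step is condition (2) of Proposition \ref{prop:narrow}: given $f\colon A\to B$ in $\HH$ with $A\in\CC(k)$ and $B\in\CC(k+1)$, one must show $\ker f\in\CC(k)$ and $\coker f\in\CC(k+1)$. I would analyze the cone in $\DD$. Complete $f$ to an exact triangle $A\xrightarrow{f}B\to C\to A[1]$. Since $A\in\UU[k]$ forces $A[1]\in\UU[k+1]$, and $B\in\UU[k+1]$, and $\UU[k+1]$ is itself a preaisle (hence closed under extensions), the triangle places $C\in\UU[k+1]$. Because homology-determinedness is preserved under shifts, as observed after the definition, $H^jC\in\UU[k+1+j]$ for every $j\in\Z$. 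On the other hand, since $A,B\in\HH$, the long exact cohomology sequence for the triangle collapses to $H^{-1}C=\ker f$, $H^0C=\coker f$, and $H^jC=0$ otherwise. Substituting $j=-1$ and $j=0$ in the previous inclusion yields $\ker f\in\UU[k]\cap\HH=\CC(k)$ and $\coker f\in\UU[k+1]\cap\HH=\CC(k+1)$, as required.

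I do not anticipate a serious obstacle; the only subtle point is choosing the correct shift when invoking the homology-determined property. One must place $C$ in $\UU[k+1]$ rather than merely $\UU[k]$, so that the conclusion about $H^{-1}C$ lands in $\UU[k]\cap\HH$ and not in the strictly weaker $\UU[k-1]\cap\HH$; the hypothesis $B\in\CC(k+1)$ is precisely what makes this sharper placement possible.
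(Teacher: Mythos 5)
Your proof is correct and follows essentially the same route as the paper: both verify the two conditions of Proposition \ref{prop:narrow}, and for condition (2) both form the cone of $f$ in $\DD$, place it in a shift of $\UU$ using closure under extensions, and read off $\ker f$ and $\coker f$ via the homology-determined property. The only difference is cosmetic: the paper shifts $f$ to degree $-k-1$ before taking the cone and treats the kernel "similarly," whereas you keep $f$ in degree $0$ and extract both $H^{-1}$ and $H^{0}$ of the cone at once.
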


\begin{proof}
Since $\UU$ is closed under positive shifts, we have $\CC(k+1)=H^{k+1}\UU=H^{k}(\UU[1])\se H^k\UU=\CC(k)$ for any $k$. We show that it satisfies (1) and (2) in Proposition \ref{prop:narrow}. By Proposition \ref{prop:homdet}, we have that $\CC(k)$ is closed under extensions. Therefore it satisfies (1) in Proposition \ref{prop:narrow}. We show that it satisfies (2) in Proposition \ref{prop:narrow}. Let $f\colon A\to B$ be an arbitrary morphism in $\HH$ with $A\in\CC(k)$ and $B\in\CC(k+1)$. We show $\coker f\in\CC(k+1)$, similarly it is shown $\ker f\in\CC(k)$. Take an exact triangle 
\[
\begin{tikzcd}
  A[-k-1] \rar["{f[-k-1]}"] & B[-k-1] \rar & X\rar & A[-k]
\end{tikzcd}
\]
in $\DD$. Then both $B[-k-1]$ and $A[-k]$ belong to $\UU$ since $\UU$ is homology-determined. Therefore $X\in\UU$ since $\UU$ is closed under extensions. Applying $H^{k+1}$ to the above sequence, we have an exact sequence
\[
\begin{tikzcd}
  A \rar["f"] & B \rar & H^{k+1}X\rar & 0
\end{tikzcd}
\]
in $\HH$. Then we have $H^{k+1}X\iso\coker f$, and $H^{k+1}X\in\CC(k+1)$ holds since $X$ belongs to $\UU$. Thus we have $\coker f\in\CC(k+1)$ as desired. 
\end{proof}

The following is the main result in this section. 

\begin{theorem}\cite[Theorem 4.11]{SR}\label{thm:SR}
There exist mutually bijective correspondences between
\begin{enumerate}
\item the set of narrow sequences in $\HH$,
\item the set of homology-determined preaisles in $\DD$. 
\end{enumerate}
\end{theorem}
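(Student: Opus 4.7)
The plan is to verify that the assignments $\theta$ and $\mu$ constructed in Propositions \ref{prop:nartoaisle} and \ref{prop:aisletonar} are mutually inverse bijections between the two sets. Since each is already known to produce an object of the correct kind (a homology-determined preaisle, resp.\ a narrow sequence), it suffices to check that the two compositions are the identity.

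For $\mu\circ\theta=\mathrm{id}$, start from a narrow sequence $\CC=\{\CC(k)\}_{k\in\Z}$ and show $H^{k}(\theta(\CC))=\CC(k)$ for every $k$. The inclusion $H^{k}(\theta(\CC))\se\CC(k)$ is immediate from the very definition of $\theta(\CC)$. For the reverse inclusion, I plan to use the stalk trick: given $C\in\CC(k)$, the object $C[-k]\in\DD$ satisfies $H^{j}(C[-k])=0$ for $j\neq k$ and $H^{k}(C[-k])\iso C$, so $C[-k]\in\theta(\CC)$ (using that each $\CC(j)$ contains $0$), and applying $H^{k}$ recovers $C$.

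For $\theta\circ\mu=\mathrm{id}$, start from a homology-determined preaisle $\UU$ and establish
\[
\UU=\{X\in\DD\mid H^{k}X\in H^{k}\UU\ \text{for any}\ k\in\Z\}.
\]
The key input is Proposition \ref{prop:homdet}, which identifies $H^{k}\UU=\UU[k]\cap\HH$. Since $H^{k}X$ automatically lies in $\HH$, the condition $H^{k}X\in H^{k}\UU$ becomes $H^{k}X\in\UU[k]$ for every $k$, which is exactly the definition of $X\in\UU$ because $\UU$ is homology-determined.

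The substantive content of the theorem—closure under extensions and positive shifts, verification of the narrow-sequence axioms, and the translation between cohomology conditions and membership in a preaisle—was already handled in Propositions \ref{prop:nartoaisle} and \ref{prop:aisletonar}. So the remaining verification is essentially formal; the only place where care is needed is the invocation of Proposition \ref{prop:homdet} to rewrite $H^{k}\UU$ as $\UU[k]\cap\HH$, and no serious obstacle is anticipated beyond this bookkeeping.
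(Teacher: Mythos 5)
Your proposal is correct and takes essentially the same route as the paper's proof: both directions are verified exactly as you describe, with $\mu\circ\theta=\mathrm{id}$ established via the stalk object $C[-k]\in\theta(\CC)$ and $\theta\circ\mu=\mathrm{id}$ via the identification $H^k\UU=\UU[k]\cap\HH$ from Proposition \ref{prop:homdet}. No gaps.
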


\begin{proof}
We show that the map $\theta$ and $\mu$ defined in Propositions \ref{prop:nartoaisle} and \ref{prop:aisletonar} are inverses of each other. Let $\CC=\{\CC(k)\}_{k\in\Z}$ be a narrow sequence in $\HH$. To show $\mu(\theta(\CC))=\CC$, it is enough to show $H^k(\theta(\CC))=\CC(k)$ for any $k\in\Z$. By definition of $\theta$, we have $H^k(\theta(\CC))\se\CC(k)$. Let $X$ be an object in $\CC(k)$. Then $X[-k]\in\theta(\CC)$ holds. Therefore $X\iso H^0X\iso H^k(X[-k])\in H^k(\theta(\CC))$ holds. 

Let $\UU$ be a homology-determined preaisle in $\DD$. We show $\theta(\mu(\UU))=\UU$. Let $X$ be an object in $\DD$. Since $\UU$ is homology-determined, $X\in\UU$ if and only if $H^kX\in\UU[k]$ for any $k\in\Z$. By Proposition \ref{prop:homdet}, we have $H^k\UU=\UU[k]\cap\HH$. Since $H^kX\in\HH$ holds, we have that $H^kX\in\UU[k]$ for any $k$ if and only if $H^kX\in H^k\UU$ for any $k$. This is equivalent to $X\in\theta(\mu(\UU))$. Thus we have done. 
\end{proof}

\begin{remark}\label{rem:summand}
By (2) in Proposition \ref{prop:narrow}, subcategories $\CC(k)$ in an ICE sequence $\{\CC(k)\}_{k\in\Z}$ are closed under cokernels in $\HH$, in particular direct summands in $\HH$. Then Theorem \ref{thm:SR} implies that any homology-determined preaisle in $\DD$ is closed under direct summands.
\end{remark}

\section{ICE sequences}\label{sec:ICE}

In this section, we introduce the notion of ICE sequences and observe that it is equivalent to narrow sequences. This characterization enables us to study preaisles, thereby $t$-structures via ICE-closed subcategories and a lattice of torsion classes.

\begin{definition}\label{def:ICEseq}
A sequence $\{\CC(k)\}_{k\in\Z}$ of subcategories of $\AA$ is an \emph{ICE sequence} if for any $k$, the subcategory $\CC(k)$ is an ICE-closed subcategory of $\AA$ and  the subcategory $\CC(k+1)$ is a torsion class in $\alpha(\CC(k))$.
\end{definition}

The above concept characterizes a narrow sequence:

\begin{proposition}\label{prop:char}
Let $\{\CC(k)\}_{k\in\Z}$ be a sequence of subcategories of $\AA$. Then $\{\CC(k)\}_{k\in\Z}$ is a narrow sequence in $\AA$ if and only if it is an ICE sequence in $\AA$.
\end{proposition}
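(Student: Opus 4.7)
The plan is to exploit the reformulation given in Proposition \ref{prop:narrow}, so that on the narrow side I only have to check extension-closure of each $\CC(k)$ together with the mixed-level kernel/cokernel condition (2), and then translate between that pair of conditions and the two clauses in the definition of an ICE sequence. Throughout I will repeatedly use the chain of inclusions $\CC(k+1)\se\alpha(\CC(k))\se\CC(k)$ that is built into the ICE-sequence hypothesis.

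For the direction ICE $\Rightarrow$ narrow, condition (1) of Proposition \ref{prop:narrow} is immediate since $\CC(k)$ is ICE-closed. For (2), given $f\colon A\to B$ with $A\in\CC(k)$ and $B\in\CC(k+1)\se\alpha(\CC(k))$, the very definition of $\alpha(\CC(k))$ gives $\ker f\in\CC(k)$. For $\coker f$, I factor $f$ through $\im f$: since $\CC(k)$ is image-closed, $\im f\in\CC(k)$, and since $\im f$ is a subobject of $B\in\alpha(\CC(k))$ lying in $\CC(k)$, Proposition \ref{prop:alpha}(1) upgrades this to $\im f\in\alpha(\CC(k))$. Hence $\coker f\iso B/\im f$ is a quotient in $\alpha(\CC(k))$ of $B\in\CC(k+1)$, and the torsion-class hypothesis yields $\coker f\in\CC(k+1)$.

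For the direction narrow $\Rightarrow$ ICE, I first verify that each $\CC(k)$ is ICE-closed. Extension-closure is (1). For a morphism $f\colon X\to Y$ with $X,Y\in\CC(k)$, the descent $\CC(k)\se\CC(k-1)$ lets me apply (2) at level $k-1$ to $f$ itself, producing $\ker f\in\CC(k-1)$ and $\coker f\in\CC(k)$. Applying (2) at level $k-1$ a second time to the inclusion $\ker f\hookrightarrow X$ (now with $\ker f\in\CC(k-1)$ and $X\in\CC(k)$) gives $\coker(\ker f\hookrightarrow X)\iso\im f\in\CC(k)$, so $\CC(k)$ is also closed under images. Next, the inclusion $\CC(k+1)\se\alpha(\CC(k))$ is immediate: for any $B\in\CC(k+1)$ and any $g\colon C\to B$ with $C\in\CC(k)$, condition (2) gives $\ker g\in\CC(k)$, which is the defining property of $\alpha(\CC(k))$. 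Finally, $\CC(k+1)$ is closed under extensions in $\alpha(\CC(k))$ by (1), and closed under quotients in $\alpha(\CC(k))$ as follows: any epimorphism $B\defl B'$ in $\alpha(\CC(k))$ with $B\in\CC(k+1)$ has kernel $K\in\alpha(\CC(k))\se\CC(k)$, and (2) applied to the inclusion $K\hookrightarrow B$ gives $B'\iso\coker(K\hookrightarrow B)\in\CC(k+1)$.

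The step I expect to be most delicate is image-closure of $\CC(k)$ in the second direction: condition (2) is inherently asymmetric between consecutive levels, so one has to notice that the symmetric situation $X,Y\in\CC(k)$ is still a legitimate "mixed-level" instance through $\CC(k)\se\CC(k-1)$, and then bootstrap by applying (2) twice—first to $f$ to pin down $\ker f$ at level $k-1$, and then to the inclusion $\ker f\hookrightarrow X$ to bring $\im f$ back to level $k$.
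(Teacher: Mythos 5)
Your proposal is correct and follows essentially the same route as the paper: both directions are handled via the reformulation in Proposition \ref{prop:narrow}, with the same double application of condition (2) across levels $k-1$ and $k$ to get image-closure, the same use of the defining property of $\alpha(\CC(k))$ for kernels, and the same use of Proposition \ref{prop:alpha}(1) to place $\im f$ in $\alpha(\CC(k))$ so that the torsion-class hypothesis yields $\coker f\in\CC(k+1)$. The only cosmetic difference is that the paper reduces to the case where $f$ is a monomorphism before invoking Proposition \ref{prop:alpha}(1), whereas you factor through the image explicitly.
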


We divide the proof of the above into the following two propositions.

\begin{proposition}\label{prop:ICEseq}
Let $\{\CC(k)\}_{k\in\Z}$ be a narrow sequence in $\AA$.
Then the following hold for any $k\in\Z$.
\begin{enumerate}
\item The subcategory $\CC(k)$ is an ICE-closed subcategory in $\AA$.
\item The subcategory $\CC(k+1)$ is a torsion class in $\alpha(\CC(k))$.
\end{enumerate}
\end{proposition}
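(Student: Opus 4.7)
My plan is to apply the characterization of narrow sequences from Proposition \ref{prop:narrow}: each $\CC(k)$ is closed under extensions, and for any morphism $f\colon A\to B$ with $A\in\CC(k)$ and $B\in\CC(k+1)$, we have $\ker f\in\CC(k)$ and $\coker f\in\CC(k+1)$. The key technical observation is that, because $\CC(k)\se\CC(k-1)$, any morphism between two objects of $\CC(k)$ may be viewed as a morphism with source in $\CC(k-1)$ and target in $\CC((k-1)+1)=\CC(k)$, so condition (2) of Proposition \ref{prop:narrow} can be invoked at level $k-1$. This level-shift trick is what drives part (1), while the analogous nesting $\CC(k+1)\se\CC(k)$ drives part (2).

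For part (1), closure of $\CC(k)$ under extensions is immediate. For $f\colon X\to Y$ with $X,Y\in\CC(k)$, applying condition (2) at level $k-1$ to $f$ itself gives $\ker f\in\CC(k-1)$ and $\coker f\in\CC(k)$, so $\CC(k)$ is closed under cokernels. To obtain closure under images, I would apply condition (2) at level $k-1$ a second time, now to the inclusion $\iota\colon\ker f\hookrightarrow X$, whose source lies in $\CC(k-1)$ and target in $\CC(k)$; since $\im f\iso X/\ker f=\coker\iota$, this yields $\im f\in\CC(k)$, and hence $\CC(k)$ is ICE-closed.

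For part (2), part (1) combined with Proposition \ref{prop:alpha} makes $\alpha(\CC(k))$ a wide subcategory of $\AA$. The inclusion $\CC(k+1)\se\alpha(\CC(k))$ holds because any $X\in\CC(k+1)$ already lies in $\CC(k)$, and condition (2) at level $k$ forces $\ker f\in\CC(k)$ for every $f\colon C\to X$ with $C\in\CC(k)$. Closure of $\CC(k+1)$ under extensions in $\alpha(\CC(k))$ is inherited from closure in $\AA$. For closure under quotients in $\alpha(\CC(k))$, consider a surjection $f\colon X\to Y$ with $X\in\CC(k+1)$ and $Y\in\alpha(\CC(k))$: because wide subcategories are closed under kernels, $\ker f\in\alpha(\CC(k))\se\CC(k)$, and applying condition (2) at level $k$ to the inclusion $\ker f\hookrightarrow X$ identifies $Y\iso\coker(\ker f\hookrightarrow X)$ as an object of $\CC(k+1)$. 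The only genuine subtlety is spotting the level-shift trick in part (1); once it is in place, both parts reduce to a direct unwinding of definitions.
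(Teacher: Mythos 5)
Your proof is correct and follows essentially the same route as the paper: both rely on the characterization in Proposition \ref{prop:narrow} and the same level-shift observation that a morphism between objects of $\CC(k)$ can be read as a morphism from $\CC(k-1)$ to $\CC(k)$, with images obtained as cokernels of the kernel inclusion. Your verification of closure under quotients in $\alpha(\CC(k))$ is also the paper's argument, merely phrased with a surjection and its kernel instead of a short exact sequence.
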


\begin{proof}
(1) It is easy to verify that $\CC(k)$ is closed under extensions and cokernels by  Proposition \ref{prop:narrow}. We show that $\CC(k)$ is closed under images. Let $f\colon X\to Y$ be a morphism with $X,Y\in\CC(k)$. By Proposition \ref{prop:narrow} (2), we have $\ker f\in\CC(k-1)$. Applying (2) in Proposition \ref{prop:narrow} to the canonical morphism $\ker f\to X$, we have $\im f\in\CC(k)$. 

(2) By Proposition \ref{prop:narrow} (2), it is easy to verify $\CC(k+1)\se\alpha(\CC(k))$. We show that $\CC(k+1)$ is closed under quotients in $\alpha(\CC(k))$. Let 
\[
\begin{tikzcd}
  0 \rar & X \rar & Y \rar & Z\rar & 0
\end{tikzcd}
\]
be a short exact sequence in $\alpha(\CC(k))$ with $Y\in\CC(k+1)$. By $X\in\CC(k)$ and Proposition \ref{prop:narrow} (2), we have $Z\in\CC(k+1)$ as desired.
\end{proof}

\begin{proposition}\label{prop:ICEtonar}
An ICE sequence $\{\CC(k)\}_{k\in\Z}$ in $\AA$ is a narrow sequence in $\AA$. 
\end{proposition}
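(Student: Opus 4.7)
The plan is to verify the two characterizing conditions of a narrow sequence given in Proposition~\ref{prop:narrow}. Since $\CC(k+1)$ is by assumption a torsion class in $\alpha(\CC(k))$, the chain $\CC(k+1)\subseteq\alpha(\CC(k))\subseteq\CC(k)$ gives the required inclusions. Condition~(1) of Proposition~\ref{prop:narrow} is immediate, as ICE-closed subcategories are closed under extensions by definition.

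The content of the argument is condition~(2): given a morphism $f\colon A\to B$ with $A\in\CC(k)$ and $B\in\CC(k+1)$, one must show $\ker f\in\CC(k)$ and $\coker f\in\CC(k+1)$. Since $B\in\CC(k+1)\subseteq\alpha(\CC(k))$, the defining property of $\alpha(\CC(k))$ applied to $f\colon A\to B$ (with $A\in\CC(k)$) directly yields $\ker f\in\CC(k)$.

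For the cokernel, first note that $\im f\in\CC(k)$ since $\CC(k)$ is closed under images. Because $\im f$ is a subobject of $B\in\alpha(\CC(k))$ which already belongs to $\CC(k)$, Proposition~\ref{prop:alpha}(1) gives $\im f\in\alpha(\CC(k))$. The inclusion $\im f\hookrightarrow B$ is then a morphism between two objects of the wide subcategory $\alpha(\CC(k))$, and its cokernel computed in $\AA$---which coincides with the cokernel taken in $\alpha(\CC(k))$---is precisely $\coker f$; hence $\coker f\in\alpha(\CC(k))$. Finally, since $B\in\CC(k+1)$ surjects onto $\coker f$ inside $\alpha(\CC(k))$ and $\CC(k+1)$ is closed under quotients in $\alpha(\CC(k))$, we conclude $\coker f\in\CC(k+1)$.

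The only delicate point is the passage $\coker f\in\alpha(\CC(k))$, which rests on the standard fact that a wide subcategory computes cokernels as in the ambient abelian category; beyond this routine check, no serious obstacle is expected.
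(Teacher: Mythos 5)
Your argument is correct and follows essentially the same route as the paper: reduce to the mono $\im f\hookrightarrow B$ using closure under images, apply Proposition~\ref{prop:alpha}(1) to place $\im f$ in $\alpha(\CC(k))$, use that the wide subcategory $\alpha(\CC(k))$ is closed under cokernels, and finish with the torsion-class property of $\CC(k+1)$ in $\alpha(\CC(k))$. The ``delicate point'' you flag is covered by the paper's convention that wide subcategories are closed under cokernels computed in $\AA$, so there is no gap.
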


\begin{proof}
Clearly it satisfies $\CC(k+1)\se\alpha(\CC(k))\se\CC(k)$ and (1) in Proposition \ref{prop:narrow}. We show that it satisfies (2) in Proposition \ref{prop:narrow}. Let $f\colon X\to Y$ be a morphism with $X\in\CC(k)$ and $Y\in\CC(k+1)$. By $\CC(k+1)\se\alpha(\CC(k))$, we have $\ker f\in\CC(k)$. We show $\coker f\in\CC(k+1)$. Since $\CC(k)$ is closed under images, we may assume that $f$ is monomorphic. Then we have a short exact sequence 
\[
\begin{tikzcd}
  0 \rar & X \rar["f"] & Y \rar & \coker f\rar & 0
\end{tikzcd}
\]
in $\AA$. By $Y\in\alpha(\CC(k))$ and Proposition \ref{prop:alpha} (1), it holds $X\in\alpha(\CC(k))$. Then all terms in the short exact sequence belong to $\alpha(\CC(k))$. Since $\CC(k+1)$ is a torsion class in $\alpha(\CC(k))$ and $Y$ belongs to $\CC(k+1)$, we have $\coker f\in\CC(k+1)$ as desired. 
\end{proof}

\begin{proof}[Proof of Proposition \ref{prop:char}]
This follows from Propositions \ref{prop:ICEseq} and \ref{prop:ICEtonar}.
\end{proof}

Combining Theorem \ref{thm:SR} and Proposition \ref{prop:char}, we obtain the following result.

\begin{theorem}\label{thm:ICE}
There exist mutually bijective correspondences between
\begin{enumerate}
\item the set of ICE sequences in $\HH$,
\item the set of homology-determined preaisles in $\DD$. 
\end{enumerate}
\end{theorem}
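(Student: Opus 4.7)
The plan is essentially to unwind two results already established in the excerpt, with no new work required. Theorem \ref{thm:SR} provides mutually inverse bijections $\theta$ and $\mu$ between narrow sequences in $\HH$ and homology-determined preaisles in $\DD$, where $\theta(\CC) = \{X \in \DD \mid H^k X \in \CC(k)\text{ for all }k\in\Z\}$ and $\mu(\UU) = \{H^k \UU\}_{k\in\Z}$. Meanwhile, Proposition \ref{prop:char} asserts that narrow sequences in $\HH$ and ICE sequences in $\HH$ are literally the same collection of sequences of subcategories. Hence the bijection from Theorem \ref{thm:SR} restricts (trivially, since there is nothing to restrict) to the desired bijection between ICE sequences in $\HH$ and homology-determined preaisles in $\DD$.

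Concretely, I would state that the maps $\theta$ and $\mu$ defined in Propositions \ref{prop:nartoaisle} and \ref{prop:aisletonar} still make sense verbatim when we input an ICE sequence rather than a narrow sequence, because Proposition \ref{prop:char} identifies the two notions. I would then invoke Theorem \ref{thm:SR} to conclude that $\theta$ and $\mu$ are mutually inverse on these identified sets.

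There is essentially no obstacle to overcome at this point: the real content of Theorem \ref{thm:ICE} was absorbed into the preceding Proposition \ref{prop:char}, whose nontrivial direction is Proposition \ref{prop:ICEseq}(1) (ICE-closedness of each $\CC(k)$, in particular closure under images, deduced by factoring a morphism through its kernel and applying narrow-sequence condition \ref{prop:narrow}(2)) together with Proposition \ref{prop:ICEtonar} (the verification that $\coker f \in \CC(k+1)$ when $f\colon X \to Y$ has $X \in \CC(k)$ and $Y \in \CC(k+1)$, using Proposition \ref{prop:alpha}(1) to know that $X$ lies in $\alpha(\CC(k))$ once $Y$ does). Given that groundwork, the proof of Theorem \ref{thm:ICE} reduces to a single citation.
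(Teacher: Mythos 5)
Your proposal is correct and is exactly the paper's own argument: the theorem is stated immediately after the sentence ``Combining Theorem \ref{thm:SR} and Proposition \ref{prop:char}, we obtain the following result,'' so the entire content is the composition of those two results, just as you describe. Your identification of where the real work lives (Propositions \ref{prop:ICEseq} and \ref{prop:ICEtonar}) also matches the paper's structure.
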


Next we observe that the above recovers the main result of \cite{ZC}. A subcategory $\EE$ of $\DD$ is \emph{$H^0$-stable} if $H^0X\in\EE$ holds for any $X\in\EE$. 

\begin{proposition}\label{prop:H0}
Assume that $(\DD^{\leq 0},\DD^{\geq 1})$ is bounded.
A thick subcategory  $\EE$ of $\DD$ is $H^0$-stable if and only if it is homology-determined. 
\end{proposition}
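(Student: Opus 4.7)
The plan is to reduce the statement to a pointwise check using Remark \ref{rem:homdet}(2). Since a thick subcategory is in particular closed under extensions, and the $t$-structure is assumed bounded, that remark tells us that $\EE$ is homology-determined if and only if the single implication $X\in\EE\Rightarrow H^kX\in\EE[k]$ for all $k\in\Z$ holds. This reformulation is the whole engine of the proof.

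For the forward direction, assume $\EE$ is homology-determined. Given $X\in\EE$, the definition yields $H^kX\in\EE[k]$ for every $k$, and in particular the case $k=0$ gives $H^0X\in\EE$, so $\EE$ is $H^0$-stable.

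For the converse, assume $\EE$ is thick and $H^0$-stable. I would use the fact that thickness implies closure under the shift functor $[1]$ and its inverse. Thus for $X\in\EE$ and $k\in\Z$ we have $X[k]\in\EE$, hence by $H^0$-stability $H^0(X[k])\in\EE$. Since $H^0(X[k])=H^kX$ (by definition of $H^k$), this gives $H^kX\in\EE$, and applying $[-k]$ once more via thickness yields $(H^kX)[-k]\in\EE$, i.e.\ $H^kX\in\EE[k]$. Combined with closure under extensions and Remark \ref{rem:homdet}(2), this shows $\EE$ is homology-determined.

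There is no serious obstacle: once Remark \ref{rem:homdet}(2) is invoked, the argument is a direct unwinding of the definitions, where the identity $H^0\circ[k]=H^k$ and the closure of $\EE$ under shifts together do all the work. The only point to be careful about is distinguishing membership in $\EE$ from membership in $\EE[k]$, which is handled by an extra application of the shift.
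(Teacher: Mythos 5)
Your proof is correct and follows essentially the same route as the paper: both reduce the problem via Remark \ref{rem:homdet}(2) to checking $H^kX\in\EE[k]$ for $X\in\EE$, and both obtain this from the closure of a thick subcategory under positive and negative shifts together with the identity $H^k=H^0\circ[k]$. Your write-up merely makes explicit the shift bookkeeping that the paper summarizes as "easily checked."
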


\begin{proof}
Clearly, if $\EE$ is homology-determined, then it is $H^0$-stable. Assume that $\EE$ is $H^0$-stable. By Remark \ref{rem:homdet} (2), it is enough to show that for any $X\in\EE$ and any $k\in\Z$, it holds $H^kX\in\EE[k]$. This is easily checked since $\EE$ is closed under positive and negative shifts. 
\end{proof}

The following bijection between (1) and (3) is given in \cite{ZC}.

\begin{corollary}\label{cor:H0}
There exist bijective correspondences between:
\begin{enumerate}
\item the set of wide subcategories of $\HH$,
\item the set of homology-determined thick subcategories of $\DD$. 
\end{enumerate}
In the correspondence from {\rm (1)} to {\rm (2)}, a wide subcategory $\WW$ corresponds to a subcategory 
\[
\widetilde{\WW}=\{X\in\DD\mid {}^{\forall}k\in\Z, \ H^kX\in\WW\}
\]
and its inverse is given by $\EE\mapsto H^0\EE$. Moreover if $(\DD^{\leq 0},\DD^{\geq 1})$ is bounded, the set (2) equals to the following:
\begin{enumerate}
\item[{\rm (3)}] the set of $H^0$-stable thick subcategories of $\DD$. 
\end{enumerate}
\end{corollary}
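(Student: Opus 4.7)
The plan is to use Theorem \ref{thm:ICE} and identify which ICE sequences in $\HH$ correspond to preaisles in $\DD$ that happen to be thick.

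First I would observe that a homology-determined preaisle $\UU$ is thick if and only if $\UU[-1]\se\UU$. Indeed, by Remark \ref{rem:summand} such a $\UU$ is closed under direct summands; it is closed under extensions and positive shifts by definition of a preaisle; and if it is also closed under $[-1]$ then for any $f\colon X\to Y$ with $X,Y\in\UU$ the triangle $Y\to C(f)\to X[1]\to Y[1]$ exhibits $C(f)$ as an extension of $X[1]$ by $Y$, so $\UU$ is triangulated and hence thick.

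Next I would translate this shift condition through the bijection of Theorem \ref{thm:ICE}. Setting $\CC(k)=H^k\UU$, the inclusion $\UU[-1]\se\UU$ is equivalent to $\CC(k-1)\se\CC(k)$ for every $k\in\Z$: if $A\in\CC(k-1)$ then $A=H^{k-1}X$ for some $X\in\UU$, and $X[-1]\in\UU$ gives $A=H^k(X[-1])\in\CC(k)$; conversely, $\CC(k-1)\se\CC(k)$ forces $H^k(X[-1])=H^{k-1}X\in\CC(k-1)\se\CC(k)$ for all $k$ and all $X\in\UU=\theta(\CC)$. Combining $\CC(k-1)\se\CC(k)$ with the built-in inclusion $\CC(k+1)\se\CC(k)$ of an ICE sequence shows the sequence is constant with common value some $\WW\se\HH$.

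It then remains to verify that a constant sequence $\CC(k)=\WW$ is an ICE sequence if and only if $\WW$ is a wide subcategory of $\HH$. If the constant sequence is ICE, then $\WW$ is ICE-closed and $\WW=\CC(1)$ is a torsion class in $\alpha(\CC(0))=\alpha(\WW)$, so in particular $\WW\se\alpha(\WW)$; since $\alpha(\WW)\se\WW$ always holds by construction, $\WW=\alpha(\WW)$ is wide by Proposition \ref{prop:alpha}. Conversely, if $\WW$ is wide then closure under kernels immediately gives $\alpha(\WW)=\WW$, and $\WW$ is trivially a torsion class in itself.

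Finally I would read off the explicit maps. Under Theorem \ref{thm:ICE} the wide subcategory $\WW$ corresponds to $\theta(\{\WW\}_{k\in\Z})=\{X\in\DD\mid H^kX\in\WW\text{ for all }k\}=\widetilde{\WW}$, and the inverse extracts $\CC(0)=H^0\UU$. The equivalence of (2) and (3) under the boundedness hypothesis is exactly Proposition \ref{prop:H0}. The only step with any bite is the identification of constant ICE sequences with wide subcategories, and even there the argument amounts to unwinding the definition of $\alpha$.
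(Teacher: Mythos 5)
Your proposal is correct and follows essentially the same route as the paper: both pass through Theorem \ref{thm:ICE}, identify homology-determined thick subcategories with constant ICE sequences (via closure under negative shifts), match constant ICE sequences with wide subcategories using $\alpha(\WW)=\WW$, and invoke Proposition \ref{prop:H0} for the equivalence with (3). You merely spell out the steps the paper labels ``easy to check,'' and your verifications are accurate.
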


\begin{proof}
Let $\UU$ be a homology-determined preaisle in $\DD$ and $\CC=\{\CC(k)\}_{k\in\Z}$ the corresponding ICE sequence in $\HH$. Then $\UU$ is closed under direct summands by Remark \ref{rem:summand}. It is easy to check that $\UU$ is a thick subcategory if and only if it holds $\CC(k)=\CC(k+1)$ for any $k\in\Z$. To a wide subcategory $\WW$, we associate an ICE sequence $\{\CC(k)\}_{k\in\Z}$ satisfying $\CC(k)=\WW$ for any $k\in\Z$. It is easy to verify that this gives a bijection between the set of wide subcategories of $\HH$ and the set of ICE sequences $\{\CC(k)\}_{k\in\Z}$ satisfying $\CC(k)=\CC(k+1)$ for any $k\in\Z$. Combining these, we obtain the bijection between (1) and (2). The latter part follows from Proposition \ref{prop:H0}.
\end{proof}

In the rest of this section, we consider ICE sequences which correspond to $n$-intermediate preaisles. 

\begin{definition}
Let $n$ be a positive integer. 
\begin{enumerate}
\item A preaisle $\UU$ in $\DD$ is \emph{$n$-intermediate} if it satisfies $\DD^{\leq -n+1}\subseteq\UU\subseteq \DD^{\leq 0}$. 
\item A sequence $\{\CC(k)\}_{k\in\Z}$ \emph{has length $n$} if it satisfies $\CC(1)=0$ and $\CC(-n+1)=\HH$.
\end{enumerate}
We call $2$-intermediate simply \emph{intermediate}.
\end{definition}

The correspondences in Theorem \ref{thm:ICE} restrict to $n$-intermediate preaisles as follows:

\begin{corollary}\label{cor:n-int}
Suppose that $(\DD^{\leq 0},\DD^{\geq 1})$ is nondegenerate. Then there exist bijective correspondences between
\begin{enumerate}
\item the set of ICE sequences in $\HH$ of length $n$,
\item the set of $n$-intermediate homology-determined preaisles in $\DD$. 
\end{enumerate}
\end{corollary}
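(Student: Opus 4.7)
The plan is to use Theorem \ref{thm:ICE} as a black box and verify that, under the bijection $\CC \leftrightarrow \theta(\CC)$, the two defining conditions of a length-$n$ ICE sequence ($\CC(1)=0$ and $\CC(-n+1)=\HH$) match the two inclusions defining $n$-intermediacy ($\theta(\CC) \subseteq \DD^{\leq 0}$ and $\DD^{\leq -n+1} \subseteq \theta(\CC)$). Since the descending condition $\CC(k+1)\subseteq\CC(k)$ is built into both narrow sequences and ICE sequences, the single equality $\CC(1)=0$ propagates to $\CC(k)=0$ for all $k\geq 1$, and $\CC(-n+1)=\HH$ propagates to $\CC(k)=\HH$ for all $k\leq -n+1$.

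First I will handle the upper bound. The implication $\theta(\CC)\subseteq\DD^{\leq 0} \Rightarrow \CC(1)=0$ is immediate, since $\CC(1) = H^1(\theta(\CC))$ by Proposition \ref{prop:aisletonar}, and $H^1$ vanishes on $\DD^{\leq 0}$. The converse is the step where nondegeneracy enters: if $\CC(k)=0$ for all $k\geq 1$, then every $X\in\theta(\CC)$ has $H^kX=0$ for all $k\geq 1$, and I will deduce $X\in\DD^{\leq 0}$ by inspecting the truncation triangle $X^{\leq 0}\to X\to X^{\geq 1}\to$. The object $X^{\geq 1}$ has all cohomologies zero, and iterating the truncation $(H^iX^{\geq 1})[-i]\to X^{\geq i}\to X^{\geq i+1}\to$ from Proposition \ref{prop:decomp}-style reasoning places $X^{\geq 1}$ in $\bigcap_{n\geq 1}\DD^{\geq n}=0$. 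This is the step I expect to be the main obstacle, because it is the only place where the general statement fails without the nondegeneracy hypothesis.

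Next I will handle the lower bound. For $\DD^{\leq -n+1}\subseteq\theta(\CC) \Rightarrow \CC(-n+1)=\HH$, take any $Y\in\HH$; then $Y[n-1]$ lies in $\HH[n-1]\subseteq\DD^{\leq -n+1}\subseteq\theta(\CC)$, and applying $H^{-n+1}$ yields $Y\iso H^{-n+1}(Y[n-1])\in\CC(-n+1)$. Conversely, assume $\CC(-n+1)=\HH$, so $\CC(k)=\HH$ for every $k\leq -n+1$. For any $X\in\DD^{\leq -n+1}$ the cohomologies $H^kX$ vanish for $k\geq -n+2$ (trivially in $\CC(k)$) and lie in $\HH=\CC(k)$ for $k\leq -n+1$, giving $X\in\theta(\CC)$ directly from the definition in Proposition \ref{prop:nartoaisle}.

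Combining the two equivalences with Theorem \ref{thm:ICE} yields the claimed bijection. As a sanity check, the case $n=1$ collapses to $\UU=\DD^{\leq 0}$ on the preaisle side and to the constant sequence $\CC(k)=\HH$ for $k\leq 0$, $\CC(k)=0$ for $k\geq 1$ on the other side, as expected.
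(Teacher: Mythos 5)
Your proposal is correct and follows essentially the same route as the paper: both reduce to checking that under the bijection of Theorem \ref{thm:ICE} the condition $\CC(1)=0$ matches $\theta(\CC)\se\DD^{\leq 0}$ (with nondegeneracy used exactly at the step you flag, to pass from $H^kX=0$ for $k\geq 1$ to $X\in\DD^{\leq 0}$) and that $\CC(-n+1)=\HH$ matches $\DD^{\leq -n+1}\se\theta(\CC)$ via shifting an object of $\HH$. Your truncation argument placing $X^{\geq 1}$ in $\bigcap_{m}\DD^{\geq m}=0$ is a correct elaboration of the step the paper leaves implicit.
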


\begin{proof}
We show the maps in Theorem \ref{thm:ICE} restrict to (1) and (2). Let $\CC=\{\CC(k)\}_{k\in\Z}$ be an ICE sequence of length $n$. We show $\DD^{\leq -n+1}\se\theta(\CC)$. Let $X$ be an object in $\DD^{\leq -n+1}$. Then $H^kX=0$ holds for any $k>-n+1$. Therefore $X\in\theta(\CC)$ holds by definition of $\theta(\CC)$. We show $\theta(\CC)\se\DD^{\leq 0}$. Let $X$ be an object in $\theta(\CC)$. By $\CC(1)=0$, it holds $H^kX=0$ for any $k>0$. Since $(\DD^{\leq 0},\DD^{\geq 1})$ is nondegenerate, we have $X\in\DD^{\leq 0}$. 

Let $\UU$ be an $n$-intermediate homology-determined preaisle in $\DD$. We show that $\{H^k\UU\}_{k\in\Z}$ is an ICE sequence of length $n$. Since $\UU\se\DD^{\leq 0}$, we have $H^1\UU=0$. We show $H^{-n+1}\UU=\HH$. Let $A$ be an object in $\HH$. By $A[n-1]\in\DD^{\leq -n+1}\se\UU$, we have $A=H^{-n+1}(A[n-1])\in H^{-n+1}\UU$. 
\end{proof}

Moreover we consider the case of $n=2$, that is, the intermediate case.

\begin{proposition}\label{prop:nondeg}
If $(\DD^{\leq 0},\DD^{\geq 1})$ is nondegenerate, then every intermediate preaisle in $\DD$ is homology-determined. 
\end{proposition}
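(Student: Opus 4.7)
The plan is to fix an intermediate preaisle $\UU$ in $\DD$, so that $\DD^{\leq -1}\se\UU\se\DD^{\leq 0}$, and for an arbitrary $X\in\DD$ prove both directions of the homology-determined condition directly, reducing everything to truncation triangles and the preaisle axioms.

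First I would simplify what the condition $H^kX\in\UU[k]$ means shift-by-shift. Because $\UU\supseteq\DD^{\leq -1}$, for $k\leq -1$ we have $\UU[k]\supseteq\DD^{\leq -1-k}\supseteq\DD^{\leq 0}\supseteq\HH$, so the condition is automatic on $H^kX\in\HH$. Because $\UU\se\DD^{\leq 0}$, for $k\geq 1$ we have $\UU[k]\se\DD^{\leq -k}$, hence $\UU[k]\cap\HH\se\DD^{\leq -1}\cap\DD^{\geq 0}=0$, and the condition becomes $H^kX=0$. Thus the homology-determined condition for $\UU$ collapses to the single requirement that \emph{$H^kX=0$ for every $k\geq 1$ and $H^0X\in\UU$}.

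For the forward implication, suppose $X\in\UU$. Then $X\in\DD^{\leq 0}$ immediately gives $H^kX=0$ for $k\geq 1$. To obtain $H^0X\in\UU$, I would rotate the canonical truncation triangle $\tau^{\leq -1}X\to X\to H^0X\to (\tau^{\leq -1}X)[1]$ into
\[
X\to H^0X\to (\tau^{\leq -1}X)[1]\to X[1].
\]
Here $(\tau^{\leq -1}X)[1]\in\DD^{\leq -2}\se\DD^{\leq -1}\se\UU$ and $X\in\UU$, so extension closure of the preaisle yields $H^0X\in\UU$.

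For the converse, suppose $H^kX=0$ for every $k\geq 1$ and $H^0X\in\UU$. This is where the nondegeneracy hypothesis is used. The object $\tau^{\geq 1}X$ lies in $\DD^{\geq 1}$ and satisfies $H^k(\tau^{\geq 1}X)=0$ for all $k\in\Z$; iterating the truncation triangle $\tau^{\leq n}Y\to Y\to\tau^{\geq n+1}Y$ and using that each $\tau^{\leq n}(\tau^{\geq 1}X)=(H^n(\tau^{\geq 1}X))[-n]=0$ gives $\tau^{\geq 1}X\in\DD^{\geq n}$ for every $n\geq 1$, so $\tau^{\geq 1}X\in\bigcap_{n\in\Z}\DD^{\geq n}=0$ and hence $X\in\DD^{\leq 0}$. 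Now in the triangle $\tau^{\leq -1}X\to X\to H^0X\to (\tau^{\leq -1}X)[1]$ both outer terms lie in $\UU$, so extension closure yields $X\in\UU$. The only genuine obstacle is precisely this converse step: without nondegeneracy the vanishing of all positive cohomologies would not force $X\in\DD^{\leq 0}$, and the reduction carried out in the second paragraph would break down.
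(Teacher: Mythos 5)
Your proof is correct and follows essentially the same route as the paper's: both directions hinge on the truncation triangle $\tau^{\leq -1}X\to X\to H^0X\to(\tau^{\leq -1}X)[1]$ together with extension-closure of $\UU$, and nondegeneracy is used exactly where the paper uses it, to deduce $X\in\DD^{\leq 0}$ from the vanishing of positive cohomologies. Your preliminary reduction of the homology-determined condition to ``$H^kX=0$ for $k\geq 1$ and $H^0X\in\UU$'' is just a clean repackaging of the case analysis the paper performs inline.
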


\begin{proof}
Let $\UU$ be an intermediate preaisle in $\DD$ and $X$ an object in $\DD$. Assume that $X$ belongs to $\UU$. We show $H^kX\in\UU[k]$ for any $k\in\Z$. We have $H^kX\in\UU[k]$ for any $k>0$ since it holds $H^kX=0$ by $\UU\se\DD^{\leq 0}$. Besides $H^kX\in\UU[k]$ holds for any $k<0$ since $H^kX\in\DD^{\leq 0}$ and $\DD^{\leq 0}\se\UU[k]$ hold. We show $H^0X\in\UU$. Take an exact triangle
\[
\begin{tikzcd}
  X^{\leq -1} \rar & X \rar & H^0X\rar & X^{\leq -1}[1].
\end{tikzcd}
\]
where $X^{\leq -1}$ belongs to $\DD^{\leq -1}\se\UU$. Since $\UU$ is closed under positive shifts and extensions, we have $H^0X\in\UU$. 

Assume that $X$ satisfies $H^kX\in\UU[k]$ for any $k\in\Z$. We show $X\in\UU$. For any $k>0$, it hold $H^kX\in\DD^{\geq 0}$ and $\UU[k]\se\DD^{\leq -k}\se\DD^{\leq -1}$. Therefore $H^kX=0$ holds for any $k>0$. Since $(\DD^{\leq 0},\DD^{\geq 1})$ is nondegenerate, we have $X\in\DD^{\leq 0}$. Consider the above exact triangle. Then $X^{\leq -1}\in\UU$ holds by $\DD^{\leq -1}\se\UU$ and $H^0X\in\UU$ holds by the assumption. Since $\UU$ is closed under extensions, we have $X\in\UU$. 
\end{proof}

The following is easily checked from the definition of ICE sequences. 

\begin{proposition}\label{prop:int}
There exist bijective correspondences between:
\begin{enumerate}
\item the set of ICE sequences in $\AA$ of length $2$. 
\item the set of torsion classes in $\AA$,
\end{enumerate}
The correspondence from {\rm (1)} to {\rm (2)} is given by $\{\CC(k)\}_{k\in\Z}\mapsto\CC(0)$.
\end{proposition}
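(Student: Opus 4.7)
The plan is to unpack what an ICE sequence of length $2$ forces on each term $\CC(k)$, and then verify that the only nontrivial datum is $\CC(0)$ and that it must be precisely a torsion class in $\AA$.

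First I would note that by Definition of length, an ICE sequence $\{\CC(k)\}_{k\in\Z}$ of length $2$ satisfies $\CC(1)=0$ and $\CC(-1)=\AA$. Because any ICE sequence is descending (Proposition \ref{prop:ICEtonar} shows it is a narrow sequence, and narrow sequences satisfy $\CC(k+1)\subseteq\CC(k)$; alternatively this follows directly from $\CC(k+1)\subseteq\alpha(\CC(k))\subseteq\CC(k)$), we get $\CC(k)=0$ for all $k\geq 1$ and $\CC(k)=\AA$ for all $k\leq -1$. Hence the whole sequence is determined by the single subcategory $\CC(0)$.

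Next I would examine the defining condition of an ICE sequence at each index. The condition that $\CC(k)$ be ICE-closed and that $\CC(k+1)$ be a torsion class in $\alpha(\CC(k))$ is automatic at every $k\neq -1$: when $\CC(k)\in\{0,\AA\}$, both $\CC(k)$ and $\alpha(\CC(k))$ are trivial cases (note $\alpha(\AA)=\AA$ directly from the definition of $\alpha$, since every kernel in $\AA$ lies in $\AA$), and $0$ is trivially a torsion class in any abelian subcategory. The only nontrivial condition occurs at $k=-1$, where it requires that $\CC(0)$ be a torsion class in $\alpha(\CC(-1))=\alpha(\AA)=\AA$. Thus the map $\{\CC(k)\}_{k\in\Z}\mapsto \CC(0)$ lands in the set of torsion classes in $\AA$.

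Finally, for the inverse, given a torsion class $\TT$ in $\AA$, I would set
\[
\CC(k)=\begin{cases}0 & k\geq 1,\\ \TT & k=0,\\ \AA & k\leq -1,\end{cases}
\]
and check that this defines an ICE sequence of length $2$: every $\CC(k)$ is ICE-closed (torsion classes are ICE-closed by \cite[Lemma 2.2]{ES}, and $0,\AA$ trivially so), and the torsion-class-in-$\alpha$ condition holds at each $k$ by the analysis above. These two constructions are manifestly inverse to one another, giving the claimed bijection. No step here is really an obstacle; the content is simply that length $2$ collapses the ICE-sequence data to a single nontrivial slot whose compatibility condition reduces to being a torsion class in $\AA$.
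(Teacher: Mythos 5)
Your proof is correct and is exactly the verification the paper leaves implicit (the paper only remarks that the statement "is easily checked from the definition of ICE sequences"): the length-$2$ condition together with the descending property $\CC(k+1)\subseteq\alpha(\CC(k))\subseteq\CC(k)$ pins down every term except $\CC(0)$, and the only nontrivial compatibility is that $\CC(0)$ be a torsion class in $\alpha(\AA)=\AA$. Your check of the trivial slots and the explicit inverse construction are both accurate, so nothing is missing.
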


The following is an analogous result of \cite[Proposition 2.1]{Woo}. 

\begin{corollary}\label{cor:HRS}
If $(\DD^{\leq 0},\DD^{\geq 1})$ is nondegenerate, then
there exist bijective correspondences between:
\begin{enumerate}
\item the set of torsion classes in $\HH$,
\item the set of intermediate preaisles in $\DD$. 
\end{enumerate}
\end{corollary}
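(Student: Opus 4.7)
The plan is to assemble Corollary \ref{cor:HRS} by chaining three results that have already been established in the excerpt, so the proof should essentially be a one-line composition of bijections. Specifically, I want to compose
\[
\{\text{torsion classes in }\HH\}\;\longleftrightarrow\;\{\text{ICE sequences in }\HH\text{ of length }2\}\;\longleftrightarrow\;\{\text{intermediate homology-determined preaisles in }\DD\}\;\longleftrightarrow\;\{\text{intermediate preaisles in }\DD\}.
\]
The first bijection is Proposition \ref{prop:int}, which identifies ICE sequences of length $2$ with torsion classes by sending $\{\CC(k)\}_{k\in\Z}$ to $\CC(0)$. The middle bijection is the $n=2$ instance of Corollary \ref{cor:n-int}, which requires the nondegeneracy hypothesis on $(\DD^{\leq 0},\DD^{\geq 1})$. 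The third identification uses Proposition \ref{prop:nondeg}, which upgrades ``intermediate homology-determined preaisle'' to just ``intermediate preaisle'' under the same nondegeneracy assumption.

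So the body of the proof would read: by Proposition \ref{prop:nondeg}, every intermediate preaisle in $\DD$ is homology-determined, so the set of intermediate preaisles coincides with the set of $2$-intermediate homology-determined preaisles. By Corollary \ref{cor:n-int} applied with $n=2$, this set is in bijection with the set of ICE sequences in $\HH$ of length $2$. By Proposition \ref{prop:int}, the latter is in bijection with the set of torsion classes in $\HH$ via $\{\CC(k)\}\mapsto \CC(0)$. Composing the three bijections gives the desired correspondence.

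There is essentially no obstacle, since every ingredient has already been proved; the only thing worth being careful about is making sure the constituent bijections are mutually inverse in the expected sense, and that the explicit map from a torsion class $\TT\subseteq\HH$ to its associated intermediate preaisle $\UU_\TT\subseteq\DD$ (namely $\UU_\TT=\{X\in\DD\mid H^0X\in\TT,\ H^kX=0\text{ for }k>0\}\cap\DD^{\leq 0}$, which is just $\theta$ of the length-$2$ ICE sequence $(\CC(-k\le 0)=\HH,\CC(0)=\TT,\CC(k\ge 1)=0)$) matches the classical HRS description. If desired, a single concluding sentence can record this explicit form, but no further argument is needed.
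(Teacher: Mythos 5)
Your proof is correct and is essentially identical to the paper's own argument, which likewise deduces the corollary by combining Corollary \ref{cor:n-int} (with $n=2$) and Propositions \ref{prop:nondeg} and \ref{prop:int}. The extra remarks about the explicit form of the bijection are consistent with the definitions of $\theta$ and of ICE sequences of length $2$, so nothing further is needed.
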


\begin{proof}
This follows from Corollary \ref{cor:n-int} and Propositions \ref{prop:nondeg} and \ref{prop:int}.
\end{proof}

\section{A classification of $t$-structures}\label{sec:cla}

We consider the following problem: When does a preaisle $\theta(\CC)$ in Theorem \ref{thm:SR} become an aisle, that is, induce a $t$-structure on $\DD$? The following proposition gives a necessary condition of $\CC$. This is stated in \cite[Proposition 2.8]{SR} in the case of the bounded derived category of a hereditary abelian category.

\begin{proposition}\label{prop:nece}
Let $\UU$ be a homology-determined aisle in $\DD$. Then $H^k\UU$ is contravariantly finite in $\HH$ for any $k$. 
\end{proposition}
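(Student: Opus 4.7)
The plan is to show that for any $A\in\HH$, the morphism obtained by applying $H^0$ to the aisle truncation of $A[-k]$ (shifted back by $k$) is a right $H^k\UU$-approximation of $A$. The key observation is that Proposition \ref{prop:homdet} gives us the equality $H^k\UU=\UU[k]\cap\HH$, so morphisms targeting $A$ from $H^k\UU$ are, after a shift, morphisms into $A[-k]$ from objects of $\UU$, and hence fall under the universal property of the aisle.

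First, I would recall the universal property of an aisle: for each $X\in\DD$ there is a triangle $U_X\to X\to V_X\to U_X[1]$ with $U_X\in\UU$ and $V_X\in\UU^\perp$. Because $\UU$ is a preaisle ($\UU[1]\se\UU$), the orthogonal $\UU^\perp$ is closed under $[-1]$, so $\DD(U',V_X[-n])=0$ for all $U'\in\UU$ and $n\ge 0$. In particular, $U_X\to X$ is a right $\UU$-approximation in the classical sense: every morphism $U'\to X$ with $U'\in\UU$ factors through $U_X$.

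Next, fix $A\in\HH$ and $k\in\Z$. Apply the aisle truncation to $A[-k]$ to obtain a triangle
\[
U\to A[-k]\to V\to U[1],
\]
with $U\in\UU$ and $V\in\UU^\perp$. Shift by $[k]$ and apply the cohomological functor $H^0$; since $A\in\HH$ we have $H^0A=A$, so we obtain a morphism
\[
\phi:=H^0(U[k]\to A)\colon H^kU\longrightarrow A
\]
in $\HH$. By definition $H^kU\in H^k\UU$, which by Proposition \ref{prop:homdet} is a subcategory of $\HH$, so $\phi$ is an admissible candidate.

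Finally, I would verify that $\phi$ is a right $H^k\UU$-approximation. Let $\psi\colon W'\to A$ with $W'\in H^k\UU$. By Proposition \ref{prop:homdet}, $W'[-k]\in\UU$, so $\psi[-k]\colon W'[-k]\to A[-k]$ factors as $\psi[-k]=(U\to A[-k])\circ\chi$ for some $\chi\colon W'[-k]\to U$. Shifting back gives $\psi=(U[k]\to A)\circ\chi[k]$, and applying $H^0$ (which is the identity on morphisms between objects of $\HH$) yields $\psi=\phi\circ H^0(\chi[k])$, the required factorization in $\HH$.

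The only subtle point — and what I would flag as the main thing to get right — is the compatibility between the universal property of the aisle truncation (which is a statement in $\DD$) and the intended factorization in $\HH$. This is handled cleanly by the functoriality of $H^0$ together with the fact that $H^0$ acts as the identity on morphisms in $\HH$, but one has to verify carefully that the appropriate $\Hom$-vanishing (from $\UU$ to $V[-1]$) indeed holds; this is precisely where the preaisle hypothesis $\UU[1]\se\UU$ is used. The remainder of the argument is a straightforward diagram chase.
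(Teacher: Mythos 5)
Your proof is correct and follows essentially the same approach as the paper: both take the aisle truncation $U\to A[-k]$ (the canonical right $\UU$-approximation) and produce the candidate approximation $H^kU\to A$ in $\HH$, using Proposition \ref{prop:homdet} to identify $H^k\UU$ with $\UU[k]\cap\HH$. The only cosmetic difference is in the verification: the paper factors through the truncation $U^{\geq 0}$ and composes a right $\UU$-approximation with a right $\HH$-approximation, whereas you push the factorization in $\DD$ down to $\HH$ by applying the cohomological functor $H^0$ directly, which works equally well.
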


\begin{proof}
Since $\UU[k]$ is also a homology-determined aisle, it is enough to show that $H^0\UU$ is contravariantly finite in $\HH$. By Proposition \ref{prop:homdet}, it holds that $H^0\UU=\UU\cap\HH$. Let $X$ be an object in $\HH$. Since $\UU$ is an aisle in $\DD$, there exists a right $\UU$-approximation $f\colon U\to X$. Consider the exact triangle 
\[
\begin{tikzcd}
  U^{\leq -1} \rar & U \rar["\varphi"] & U^{\geq 0}\rar & U^{\leq -1}[1]
\end{tikzcd}
\]
with $U^{\leq -1}\in\DD^{\leq -1}$ and $U^{\geq 0}\in\DD^{\geq 0}$. Since $\UU$ is homology-determined, it is easy to check that $U^{\geq 0}$ belongs to $\UU$. By $X\in\DD^{\geq 0}$, there exists a morphism $g\colon U^{\geq 0}\to X$ such that $f=g\varphi$. Then $g$ is also a right $\UU$-approximation. Take the natural morphism $\psi\colon H^0U\to U^{\geq 0}$, which is a right $\HH$-approximation of $U^{\geq 0}$. Then it is easy to verify that the composition $g\psi\colon H^0U\to X$ gives a right $\UU\cap\HH$-approximation of $X$.
\end{proof}

We consider conditions which guarantee the converse of the above statement holds. 
In the rest of this section, \emph{we assume that $(\DD^{\leq 0},\DD^{\geq 1})$ is bounded}. Then $\DD$ is idempotent complete by \cite{LC}, and every contravariantly finite preaisle in $\DD$ closed under direct summands is an aisle by \cite[Corollary 4.5]{CCS}. Moreover \emph{we assume that $\HH$ is contravariantly finite in $\DD$}. Then every contravariantly finite subcategory of $\HH$ is also contravariantly finite in $\DD$. 
For example, the following $t$-structures satisfy the above conditions:
\begin{itemize}
\item \cite[Corollary. 2.12]{CPP} Algebraic $t$-structures on the bounded derived category $D^b(\mod\Lambda)$ of the category $\mod\Lambda$ consisting of finitely generated right $\Lambda$-modules over a finite dimensional algebra $\Lambda$ over a field.
\item \cite[Remark. 2.15]{CPP} The standard $t$-structure on the bounded derived category $D^b(\AA)$ of a hereditary abelian category $\AA$ which has enough injectives.
\end{itemize}

An ICE sequence $\{\CC(k)\}_{k\in\Z}$ in $\AA$ is \emph{contravariantly finite} if $\CC(k)$ is contravariantly finite in $\AA$ for any $k$. We say that an ICE sequence $\{\CC(k)\}_{k\in\Z}$ \emph{terminates to zero} provided that there is an integer $n$ such that $\CC(n)=0$.

\begin{proposition}\label{prop:term}
Let $\CC=\{\CC(k)\}_{k\in\Z}$ be a contravariantly finite ICE sequence in $\HH$ which terminates to zero. Then $\UU=\theta(\CC)$ is contravariantly finite in $\DD$. Hence $\UU$ is an aisle in $\DD$. 
\end{proposition}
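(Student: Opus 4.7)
The plan is to establish that $\UU=\theta(\CC)$ is contravariantly finite in $\DD$. Once this is done, since $\UU$ is closed under direct summands by Remark~\ref{rem:summand} and $\DD$ is idempotent complete by \cite{LC} under the boundedness assumption, the result \cite[Corollary 4.5]{CCS} will yield that $\UU$ is an aisle, giving the second conclusion.

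Two reductions simplify the contravariant finiteness check. Since $\CC(n)=0$ and the sequence is decreasing, $\CC(k)=0$ for every $k\geq n$, so every $U\in\UU$ satisfies $H^kU=0$ for $k\geq n$, whence $\UU\se\DD^{\leq n-1}$. Consequently any morphism $U\to X$ with $U\in\UU$ factors through $\tau^{\leq n-1}X\to X$, and it suffices to construct right $\UU$-approximations for $\tau^{\leq n-1}X$; by boundedness this object lies in $\DD^{[M,n-1]}$ for some integer $M$ and therefore has finite cohomological length. I would then proceed by induction on that length.

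For the base case, $X=A[-k]$ with $A\in\HH$ and $k\leq n-1$, take a right $\CC(k)$-approximation $c\colon C\to A$ in $\HH$, which exists since $\CC(k)$ is contravariantly finite in $\HH$. Then $C[-k]\in\UU$, and $c[-k]\colon C[-k]\to A[-k]$ is a right $\UU$-approximation: the natural isomorphism $\Hom_\DD(W,A[-k])\iso\Hom_\HH(H^kW,A)$ (valid for any $W\in\DD$ because $A[-k]\in\DD^{[k,k]}$), combined with $H^kW\in\CC(k)$ for $W\in\UU$, reduces the factoring of $h\colon W\to A[-k]$ through $c[-k]$ to factoring $H^k(h)\colon H^kW\to A$ through $c$, which holds by the defining property of the $\CC(k)$-approximation. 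For the inductive step, with $X\in\DD^{[M,N]}$ of cohomological length $\ell\geq 2$, the standard truncation triangle
\[
\tau^{\leq N-1}X\to X\to H^NX[-N]\to(\tau^{\leq N-1}X)[1]
\]
together with the inductive hypothesis and the base case provides right $\UU$-approximations $f\colon U\to\tau^{\leq N-1}X$ and $g\colon V\to H^NX[-N]$; the plan is to assemble these into a triangle $U\to U_X\to V\to U[1]$ inside $\UU$ (using that $\UU$ is closed under extensions) and, by TR3, to extract a morphism $U_X\to X$ which one verifies is a right $\UU$-approximation via a triangle chase on the $\Hom(W,-)$ long exact sequence.

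The hard part will be lifting the connecting composite $\delta\circ g\colon V\to(\tau^{\leq N-1}X)[1]$ along $f[1]\colon U[1]\to(\tau^{\leq N-1}X)[1]$, which is needed to build the extension triangle for $U_X$. This is not automatic: although $\UU$ is closed under the positive shift, it is not closed under the negative one, since for $W\in\UU$ one has $H^k(W[-1])=H^{k-1}W\in\CC(k-1)$, while $W[-1]\in\UU$ would require $H^{k-1}W\in\CC(k)$, and the inclusion $\CC(k)\se\CC(k-1)$ runs the wrong way. Consequently $f[1]$ need not be a right $\UU$-approximation of $(\tau^{\leq N-1}X)[1]$. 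To resolve this obstacle, I would apply the inductive hypothesis a second time to $(\tau^{\leq N-1}X)[1]$ (which has cohomological length $\ell-1$ and lies in $\DD^{[M-1,N-2]}\se\DD^{\leq n-1}$) to obtain its own right $\UU$-approximation $f'\colon U'\to(\tau^{\leq N-1}X)[1]$ through which $\delta\circ g$ does factor, and then splice $f$, $g$, and $f'$ via an octahedral argument. An alternative I would consider is an explicit layer-by-layer construction, iteratively taking right $\CC(k)[-k]$-approximations of successive cofibers as $k$ sweeps through the finite range $[M,n-1]$ of cohomology of $X$, and showing inductively that the terminal cofiber admits no nonzero morphism from $\UU$. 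Either way, the principal subtlety is the coordination of approximations across cohomological degrees forced by the strictly decreasing chain $\CC(k+1)\se\CC(k)$ of the ICE sequence.
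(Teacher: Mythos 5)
Your reductions (truncating the target to $\DD^{[M,n-1]}$, inducting on cohomological length), your base case, and your derivation of the aisle conclusion from contravariant finiteness via Remark~\ref{rem:summand}, idempotent completeness \cite{LC} and \cite[Corollary 4.5]{CCS} are all correct, and you have identified exactly the right obstruction in the inductive step. But your proposed resolution of that obstruction does not work, and this is a genuine gap. Factoring $\delta\circ g=f'\circ e$ through a right $\UU$-approximation $f'\colon U'\to(\tau^{\leq N-1}X)[1]$ gives you nothing usable: to build the triangle $U\to U_X\to V\to U[1]$ over the truncation triangle you need $\delta\circ g$ to factor through $u[1]$ for some $u\colon U_1\to\tau^{\leq N-1}X$ with $U_1\in\UU$, and $f'$ is not of that form; the only comparison available runs the wrong way ($f[1]$ factors through $f'$ because $U[1]\in\UU$, not vice versa). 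Nor does any octahedron on $V\xrightarrow{e}U'\xrightarrow{f'}(\tau^{\leq N-1}X)[1]$ produce what you need: $\mathrm{cone}(e)$ lies in $\UU$ but carries no map to $X$, while the cocone of $\delta\circ g$ maps to $X$ but contains $\tau^{\leq N-1}X\notin\UU$. The ``layer-by-layer'' alternative stalls on the same point. (A secondary issue: even granted the triangle $U\to U_X\to V\to U[1]$, verifying the approximation property is not a routine chase --- for $w\colon W\to X$ with $pw=gs$ the obstruction class in $\Hom(W,U[1])$ dies in $\Hom(W,(\tau^{\leq N-1}X)[1])$ but need not vanish.)

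What your inductive step actually requires is the dual of \cite[Theorem 1.3]{Che} --- a star product of finitely many contravariantly finite extension-closed subcategories is contravariantly finite --- which is precisely what the paper invokes. Chen's construction is genuinely different from gluing approximations of the two truncation pieces: one takes a right $(\UU\cap\DD^{\leq N-1})$-approximation $\alpha\colon U\to X$ of $X$ \emph{itself} (your inductive hypothesis supplies this, since maps from $\DD^{\leq N-1}$ into $X$ factor through $\tau^{\leq N-1}X$), completes to a triangle $U\xrightarrow{\alpha}X\to M'\to U[1]$, takes a right $\CC(N)[-N]$-approximation of the \emph{cone} $M'$, and forms the homotopy pullback $E$ of $X\to M'\leftarrow Y$; then $E\in(\UU\cap\DD^{\leq N-1})*\CC(N)[-N]\se\UU$, and an arbitrary $\phi\colon W\to X$ with $W\in\UU$ factors through $E$ because $\phi$ composed into $M'$ kills the bottom truncation of $W$ and the pullback property does the rest --- no lift of a connecting morphism is ever needed. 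If you cite Chen's result or reproduce this construction, your induction closes. For comparison, the paper organizes the argument globally rather than by induction on the target: it writes $\UU=\bigcup_{l>0}\CC(-l)[l]*\cdots*\CC(0)$ using Proposition~\ref{prop:decomp}, gets contravariant finiteness of each finite star product from Chen, and uses boundedness ($\DD(\DD^{\leq-m},X)=0$ for $m\gg0$) to show a single finite star product already approximates $X$ against all of $\UU$ --- which plays the role of your initial truncation of $X$.
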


\begin{proof}
We may assume that $\CC(k)=0$ holds for $k>0$. By Proposition \ref{prop:decomp} and that $\UU$ is homology-determined, the equality
\[
\UU=\bigcup_{l>0}\CC(-l)[l]*\cdots*\CC(-1)[1]*\CC(0)
\]
holds. 
By the dual statement of \cite[Theorem 1.3]{Che}, the subcategory $\CC(-l)[l]*\cdots*\CC(-1)[1]*\CC(0)$ is contravariantly finite in $\DD$ for any $l>0$. We show that any object $X$ in $\DD$ has a right $\UU$-approximation. Since $(\DD^{\leq 0},\DD^{\geq 1})$ is bounded, there is an integer $m$ such that for any $Y\in\DD^{\leq -m}$, we have $\DD(Y,X)=0$. Take a right $\CC(-m+1)[m-1]*\cdots*\CC(0)$-approximation $f\colon U\to X$ of $X$. Then we show that it gives a right $\UU$-approximation of $X$. Let $g\colon U'\to X$ be a morphism with $U'\in\UU$. By the above equality, there is an integer $l>0$ such that $U'$ belongs to $\CC(-l)[l]*\CC(-l+1)[l-1]*\cdots*\CC(0)$. If $l\leq m-1$, the morphism $g$ factors through $f$. Suppose $l\geq m$. We consider the exact triangle
\[
\begin{tikzcd}
  V \rar & U' \rar & W\rar & V[1]
\end{tikzcd}
\]
where $V\in\CC(-l)[l]*\cdots*\CC(-m)[m]$ and $W\in\CC(-m+1)[m-1]*\cdots*\CC(0)$. Then $g$ factors through $W$ by $V\in\DD^{\leq -m}$ and $\DD(\DD^{\leq -m}, X)=0$, and also $f$ since $W\in\CC(-m+1)[m-1]*\cdots*\CC(0)$ and $f$ is a right $\CC(-m+1)[m-1]*\cdots*\CC(0)$-approximation. Thus $X$ has a right $\UU$-approximation, and $\UU$ is contravariantly finite in $\DD$. 

By Remark \ref{rem:summand}, the homology-determined preaisle $\UU$ is closed under direct summands in $\DD$. Now $\UU$ is a contravariantly finite preaisle closed under direct summands. By the assumption of $(\DD^{\leq 0},\DD^{\geq 1})$, we have that $\UU$ is an aisle. 
\end{proof}

We say that an aisle $\UU$ in $\DD$ is \emph{right bounded} provided that there is an integer $n$ such that $\UU\subseteq\DD^{\leq n}$. The following result follows from Proposition \ref{prop:nece} and Proposition \ref{prop:term}. 

\begin{proposition}\label{prop:rb}
The bijections in Theorem \ref{thm:ICE} restricts to the following:
\begin{enumerate}
\item the set of contravariantly finite ICE sequences  in $\HH$ which terminate to zero,
\item the set of right bounded homology-determined aisles in $\DD$. 
\end{enumerate}
\end{proposition}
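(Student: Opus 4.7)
My plan is to verify that the bijection $\CC \mapsto \theta(\CC)$ of Theorem \ref{thm:ICE} sends the subset in (1) bijectively onto the subset in (2). Since Theorem \ref{thm:ICE} already supplies the bijection between ICE sequences and homology-determined preaisles, it will suffice to check on each side that ``contravariantly finite plus terminates to zero'' corresponds to ``aisle plus right bounded''. Propositions \ref{prop:nece} and \ref{prop:term} will do most of the work; the rest is a short cohomology-vanishing argument relating $\CC(n)=0$ and $\UU \se \DD^{\leq n-1}$, which will hinge on the nondegeneracy that follows from our standing boundedness assumption on $(\DD^{\leq 0}, \DD^{\geq 1})$.

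For the forward direction, I would start with a contravariantly finite ICE sequence $\CC = \{\CC(k)\}_{k\in\Z}$ with $\CC(n) = 0$ for some $n$. Proposition \ref{prop:term} already upgrades $\theta(\CC)$ from a preaisle to an aisle, and it is homology-determined by Theorem \ref{thm:ICE}. For right boundedness, I would use the nested inclusion $\CC(k+1) \se \CC(k)$ to conclude $\CC(k) = 0$ for all $k \geq n$. Then any $X \in \theta(\CC)$ satisfies $H^k X \in \CC(k) = 0$ for $k \geq n$, and the assumed boundedness of $(\DD^{\leq 0}, \DD^{\geq 1})$ gives nondegeneracy, which forces $X \in \DD^{\leq n-1}$, so $\theta(\CC) \se \DD^{\leq n-1}$.

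For the backward direction, I would take a right bounded homology-determined aisle $\UU$ with $\UU \se \DD^{\leq n}$. Proposition \ref{prop:nece} immediately delivers the contravariant finiteness of each $H^k\UU$ in $\HH$, which is precisely the contravariant finiteness of $\mu(\UU) = \{H^k\UU\}_{k\in\Z}$. For termination to zero, any $X \in \UU \se \DD^{\leq n}$ has $H^k X = 0$ whenever $k > n$, so $H^{n+1}\UU = 0$, giving a vanishing term in $\mu(\UU)$.

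The main ``obstacle'' is really bookkeeping rather than any genuinely new argument: the heavy lifting---constructing the approximations and assembling them into an aisle---is already packaged in Propositions \ref{prop:nece} and \ref{prop:term}, and the only subtle ingredient added here is the use of nondegeneracy in passing from cohomological vanishing in high degrees to membership of a truncated subcategory $\DD^{\leq n-1}$. Once that is observed, both directions of the restriction are essentially immediate.
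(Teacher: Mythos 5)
Your proposal is correct and matches the paper's argument, which likewise derives the statement directly from Propositions \ref{prop:nece} and \ref{prop:term}; the extra bookkeeping you supply (identifying ``terminates to zero'' with ``right bounded'' via cohomological vanishing and the nondegeneracy coming from boundedness) is exactly the step the paper leaves implicit, and it is the same mechanism the paper uses in the proof of Corollary \ref{cor:n-int}.
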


We say that an ICE sequence $\{\CC(k)\}_{k\in\Z}$ is \emph{full} provided that there are integers $m\leq n$ such that $\CC(m)=\HH$ and $\CC(n)=0$. We say that an aisle $\UU$ of $\DD$ is \emph{two-sided bounded} provided that there exist integers $m\leq n$ such that $\DD^{\leq m}\se\UU\se\DD^{\leq n}$. This notion is also called intermediate in \cite{MZ}.

\begin{proposition}\cite[Lemma 3.22]{AMY}\label{prop:alg}
Assume that $(\DD^{\leq 0},\DD^{\geq 1})$ is algebraic. Then a $t$-structure $(\UU,\VV)$ on $\DD$ is bounded if and only if $\UU$ is two-sided bounded. 
\end{proposition}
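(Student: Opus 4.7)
The forward direction is a routine unwinding. Suppose $\DD^{\leq m} \se \UU \se \DD^{\leq n}$. Shifting the left inclusion by $k$ gives $\DD^{\leq m-k} \se \UU[k]$, and boundedness of the fixed $t$-structure yields $\bigcup_{k} \UU[k] \supseteq \bigcup_{k} \DD^{\leq m-k} = \DD$. The standard correspondence $\UU_1 \se \UU_2 \Leftrightarrow \VV_1 \supseteq \VV_2$ for $t$-structures transfers the upper bound on $\UU$ into a lower bound $\DD^{\geq n+1} \se \VV$, and the same shift trick gives $\bigcup_k \VV[k] = \DD$. Hence $(\UU, \VV)$ is bounded.

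For the reverse direction I plan to use the algebraic hypothesis essentially. Let $S_1, \ldots, S_r$ be representatives of the finitely many isomorphism classes of simple objects in $\HH$. Since $(\UU, \VV)$ is bounded, for each $i$ there is an integer $k_i$ with $S_i[-k_i] \in \UU$; setting $L := -\min_i k_i$, closure of $\UU$ under positive shifts yields $S_i[L] \in \UU$ for every $i$ simultaneously. The length property of $\HH$ then lets me present any $A \in \HH$ as a finite iterated extension of the $S_i$'s, so $A[L]$ is a finite iterated extension in $\DD$ of the $S_i[L]$'s; closure of $\UU$ under extensions therefore gives $\HH[L] \se \UU$. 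To upgrade this to $\DD^{\leq -L} \se \UU$, I take an arbitrary $X \in \DD^{\leq -L}$, use boundedness of the fixed $t$-structure to place it in some $\DD^{\leq -L} \cap \DD^{\geq p}$, and invoke Proposition \ref{prop:decomp} to filter $X$ with graded pieces $(H^i X)[-i]$ for $p \leq i \leq -L$. Each piece lies in $\HH[-i]$ with $-i \geq L$, hence in $\UU$ via $\HH[L] \se \UU$ and positive-shift closure, so $X \in \UU$.

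A symmetric argument on the coaisle, using $\bigcup_k \VV[k] = \DD$ and closure of $\VV$ under negative shifts, produces an integer $L'$ with $\HH[L'] \se \VV$ and hence $\DD^{\geq -L'} \se \VV$. The orthogonality $\DD(\UU, \VV) = 0$ then forces $\UU \se {}^{\perp}(\DD^{\geq -L'}) = \DD^{\leq -L'-1}$. Combining both bounds gives $\DD^{\leq -L} \se \UU \se \DD^{\leq -L'-1}$, with $-L \leq -L'-1$ automatic from the containment chain; setting $m := -L$ and $n := -L'-1$ exhibits $\UU$ as two-sided bounded.

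The main obstacle is pinning down where the algebraic hypothesis is indispensable: the finiteness of simples (so that $\min_i k_i$ exists and produces a uniform $L$) and the length condition on $\HH$ (to propagate $\{S_i[L] \in \UU\}_i$ to $\HH[L] \se \UU$ via finitely many extensions). Without either ingredient the argument collapses at exactly these two points; everything else is formal manipulation with the filtration from Proposition \ref{prop:decomp} and orthogonality properties of the fixed $t$-structure.
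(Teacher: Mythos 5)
Your proposal is correct and follows essentially the same route as the paper's own argument (which the paper attributes to \cite[Lemma 3.22]{AMY}): the easy direction is the union-of-shifts computation, and the converse uses the finitely many simples of the algebraic heart to generate $\DD^{\leq 0}$ as the smallest preaisle containing them, plus the dual step on the coaisle. Your version merely spells out, via the length filtration and Proposition \ref{prop:decomp}, why $\HH[L]\se\UU$ upgrades to $\DD^{\leq -L}\se\UU$, which the paper compresses into the phrase that $\DD^{\leq 0}$ is the smallest preaisle containing the simples.
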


By Propositions \ref{prop:rb} and \ref{prop:alg}, we obtain the following result.

\begin{theorem}\label{thm:bdd}
The bijections in Theorem \ref{thm:ICE} restrict to the following:
\begin{enumerate}
\item the set of contravariantly finite full ICE sequences in $\HH$,
\item the set of two-sided bounded homology-determined aisles in $\DD$. 
\end{enumerate}
Moreover if $(\DD^{\leq 0},\DD^{\geq 1})$ is an algebraic $t$-structure on $\DD$, then the set {\rm (2)} coincides with
\begin{enumerate}
\item[{\rm (3)}] the set of bounded $t$-structures on $\DD$ whose aisles are homology-determined. 
\end{enumerate}
\end{theorem}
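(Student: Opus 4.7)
The plan is to derive Theorem \ref{thm:bdd} as a refinement of Proposition \ref{prop:rb}, by adding the equivalence between fullness of an ICE sequence and two-sided boundedness of the corresponding aisle, and then invoking Proposition \ref{prop:alg} for the moreover part.

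First I would verify the restricted bijection between (1) and (2). Proposition \ref{prop:rb} already provides a bijection between contravariantly finite ICE sequences terminating to zero and right bounded homology-determined aisles, so it suffices to show that under the maps $\theta$ and $\mu$ of Theorem \ref{thm:ICE}, an ICE sequence $\CC=\{\CC(k)\}_{k\in\Z}$ additionally satisfies $\CC(m)=\HH$ for some $m\in\Z$ if and only if its associated aisle $\theta(\CC)$ additionally satisfies $\DD^{\leq m'}\se\theta(\CC)$ for some $m'\in\Z$. For the forward direction, since $\CC(k+1)\se\CC(k)$, the equality $\CC(m)=\HH$ forces $\CC(k)=\HH$ for all $k\leq m$; then any $X\in\DD^{\leq m}$ satisfies $H^kX=0\in\CC(k)$ for $k>m$ and $H^kX\in\HH=\CC(k)$ for $k\leq m$, which gives $X\in\theta(\CC)$. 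For the converse, if $\DD^{\leq m'}\se\UU$, then for any $A\in\HH$ we have $A[-m']\in\DD^{\leq m'}\se\UU$, hence $A\iso H^{m'}(A[-m'])\in H^{m'}\UU=\mu(\UU)(m')$, so $\mu(\UU)(m')=\HH$.

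For the moreover part, I would use the classical result of Keller--Vossieck that every aisle $\UU$ in $\DD$ uniquely extends to a $t$-structure $(\UU,\UU^\perp[1])$, so that specifying a bounded $t$-structure whose aisle is homology-determined is the same as specifying a homology-determined aisle whose associated $t$-structure is bounded. Under the hypothesis that $(\DD^{\leq 0},\DD^{\geq 1})$ is algebraic, Proposition \ref{prop:alg} states that a $t$-structure on $\DD$ is bounded if and only if its aisle is two-sided bounded. Combining this equivalence with the bijection between (1) and (2) established above, one obtains the identification of (2) with (3).

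The main obstacle I anticipate is minimal: the proof is a careful bookkeeping exercise built on previously established correspondences. The only subtle point is ensuring that aisles in $\DD$ canonically correspond to $t$-structures (so that the sets (2) and (3) can indeed be identified), but since the $t$-structure $(\DD^{\leq 0},\DD^{\geq 1})$ is assumed bounded, $\DD$ is idempotent complete and the Keller--Vossieck correspondence is available, so this step is routine.
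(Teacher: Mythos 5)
Your proposal is correct and follows essentially the same route as the paper: the paper deduces Theorem \ref{thm:bdd} directly from Proposition \ref{prop:rb} together with Proposition \ref{prop:alg}, with the equivalence ``$\CC(m)=\HH$ for some $m$ $\Leftrightarrow$ $\DD^{\leq m}\se\theta(\CC)$ for some $m$'' handled exactly by the two computations you give (which also appear in the proof of Corollary \ref{cor:n-int}). The appeal to the aisle/$t$-structure correspondence for identifying (2) with (3) is likewise the paper's implicit step, so nothing is missing.
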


The following follows immediately from the above.

\begin{corollary}\label{cor:int}
Let $n$ be a positive integer. Then the bijections in Theorem \ref{thm:bdd} restrict to the following:
\begin{enumerate}
\item the set of contravariantly finite ICE sequences in $\HH$ of length $n$,
\item the set of $n$-intermediate $t$-structures on $\DD$ whose aisles are homology-determined. 
\end{enumerate}
\end{corollary}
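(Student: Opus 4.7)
The plan is to deduce Corollary~\ref{cor:int} as a direct consequence of Theorem~\ref{thm:bdd} restricted along the length-$n$ correspondence of Corollary~\ref{cor:n-int}, without redoing any analysis of aisles or approximations. Recall that Theorem~\ref{thm:bdd} provides a bijection between contravariantly finite full ICE sequences in $\HH$ and two-sided bounded homology-determined aisles in $\DD$, while Corollary~\ref{cor:n-int}, which applies because a bounded $t$-structure is nondegenerate, gives a bijection between ICE sequences of length $n$ and $n$-intermediate homology-determined preaisles. It therefore suffices to check that the sets in (1) and (2) of Corollary~\ref{cor:int} each sit inside their respective counterparts in Theorem~\ref{thm:bdd}, and that the bijection matches the length-$n$ condition on one side with the $n$-intermediate condition on the other.

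First I would verify the two inclusions. An ICE sequence $\{\CC(k)\}_{k\in\Z}$ of length $n$ satisfies $\CC(-n+1)=\HH$ and $\CC(1)=0$, so taking $m=-n+1$ already witnesses the fullness condition of Section~\ref{sec:cla}. Dually, an $n$-intermediate aisle $\UU$ satisfies $\DD^{\leq -n+1}\subseteq\UU\subseteq\DD^{\leq 0}$ by definition, and is in particular two-sided bounded. Thus the sets in (1) and (2) of Corollary~\ref{cor:int} are subsets of the sets in (1) and (2) of Theorem~\ref{thm:bdd}, respectively.

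Next I would splice the two bijections together. The bijection in Theorem~\ref{thm:bdd} is a restriction of $\CC\mapsto\theta(\CC)$ from Theorem~\ref{thm:ICE}, and Corollary~\ref{cor:n-int} identifies the image of a length-$n$ ICE sequence under $\theta$ with an $n$-intermediate homology-determined preaisle. Consequently, a contravariantly finite ICE sequence of length $n$ is in particular a contravariantly finite full ICE sequence, and $\theta(\CC)$ is then both two-sided bounded and an aisle (Theorem~\ref{thm:bdd}) and $n$-intermediate (Corollary~\ref{cor:n-int}); conversely, an $n$-intermediate homology-determined aisle is in particular two-sided bounded, so Theorem~\ref{thm:bdd} returns a contravariantly finite full ICE sequence, which Corollary~\ref{cor:n-int} then forces to have length $n$. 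Since every ingredient is already in hand, there is no real obstacle: the proof is a bookkeeping combination of Theorem~\ref{thm:bdd} and Corollary~\ref{cor:n-int}.
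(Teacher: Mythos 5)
Your proof is correct and matches the paper's intent: the paper simply states that Corollary \ref{cor:int} "follows immediately" from Theorem \ref{thm:bdd}, and your argument spells out exactly the intended bookkeeping, namely that length-$n$ sequences are full, $n$-intermediate aisles are two-sided bounded, and the matching of the two side conditions is already supplied by Corollary \ref{cor:n-int} (applicable since bounded implies nondegenerate). No gaps.
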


Next we give a description of ICE sequences of length $n+1$ in an abelian length category $\AA$ from the viewpoint of a lattice consisting of torsion classes in $\AA$. We denote by $\tors\AA$ the set of torsion classes in $\AA$, which forms a partially ordered set by inclusion. Moreover $\tors\AA$ is a complete lattice since there are arbitrary intersections. We collect some definitions and results.

\begin{definition}
To $\TT,\UU\in\tors\AA$, we associate the set
\[
[\UU, \TT]:=\{\CC\in\tors\AA\mid\UU\se\CC\se\TT\}
\]
called an \emph{interval} in $\tors\AA$. To an interval $[\UU,\TT]$, we associate a subcategory $\HH_{[\UU,\TT]}=\TT\cap\UU^{\perp}$ called the \emph{heart} of $[\UU,\TT]$. We call an interval $[\UU,\TT]$ a \emph{wide interval} if the heart is a wide subcategory of $\AA$. We denote by $\Hasse(\tors\AA)$ the \emph{Hasse quiver} of $\tors\AA$, the quiver whose vertex set is $\tors\AA$, and there is an arrow $\TT\to\UU$ in $\tors\AA$ if and only if $\UU\subsetneq\TT$ holds and there is no $\CC\in\tors\AA$ satisfying $\UU\subsetneq\CC\subsetneq\TT$. 
\end{definition}

\begin{proposition}\label{prop:torslattice}
Let $[\UU,\TT]$ be an interval in $\tors\AA$. 
\begin{enumerate}
\item \cite[Theorem 5.2]{AP} The following conditions are equivalent:
\begin{enumerate}
\item $[\UU,\TT]$ is a wide interval.
\item $[\UU,\TT]$ is a meet interval, that is, it holds
\[
\UU=\TT\bigcap\{\CC\in[\UU,\TT]\mid \text{there is an arrow} \ \TT\to\CC \ \text{in} \ \Hasse(\tors\AA)\}.
\]
\end{enumerate}
\item \cite[Theorem 4.2]{AP} Suppose that $[\UU,\TT]$ is a wide interval. Then there are isomorphisms of lattices 
    \[
    \begin{tikzcd}[column sep = large]
      {[\UU,\TT]} \rar["{(-)\cap\UU^{\perp}}", shift left] & \tors\HH_{[\UU,\TT]} \lar["{\UU*(-)}", shift left].
    \end{tikzcd}
    \]
\item \cite[Lemma 2.9]{ES} The above bijections preserve hearts, that is, for each interval $[\UU',\TT']$ contained in $[\UU,\TT]$, we have $\HH_{[\UU',\TT']}=\HH_{[\UU'\cap\UU^{\perp},\TT'\cap\UU^{\perp}]}$ where the right hand side is considered in an abelian category $\HH_{[\UU,\TT]}$.
\end{enumerate}
\end{proposition}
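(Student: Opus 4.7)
The plan is to address the three parts in the order (2), (3), (1), since (1) makes essential use of the isomorphism of (2). For (2), the key point is that a wide interval $[\UU,\TT]$ makes the heart $\HH_{[\UU,\TT]}=\TT\cap\UU^\perp$ a wide subcategory of $\AA$, which is therefore abelian with its exact structure inherited from $\AA$. Given $\CC\in[\UU,\TT]$, I would verify that $\CC\cap\UU^\perp$ is closed under extensions and quotients in the abelian category $\HH_{[\UU,\TT]}$: extensions in $\HH_{[\UU,\TT]}$ coincide with those in $\AA$, while a quotient in $\HH_{[\UU,\TT]}$ of an object of $\CC\cap\UU^\perp$ is also a quotient in $\AA$, hence lies in $\CC$, and remains inside $\HH_{[\UU,\TT]}\subseteq\UU^\perp$. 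For the other direction, given $\mathcal{X}\in\tors\HH_{[\UU,\TT]}$, the class $\UU*\mathcal{X}$ is checked to be a torsion class in $\AA$ contained in $[\UU,\TT]$ by verifying extension closure via a horseshoe-type diagram and quotient closure via the standard torsion-pair decomposition attached to $\UU$. Mutual inverseness then follows by applying, to each $C\in\CC$ for $\CC\in[\UU,\TT]$, the canonical torsion triangle $U_C\to C\to C/U_C$ associated to $\UU$, whose torsionfree part lies precisely in $\CC\cap\UU^\perp$.

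For (3), I would unravel the definitions on both sides. The left-hand heart is $\HH_{[\UU',\TT']}=\TT'\cap(\UU')^\perp$, computed in $\AA$. The right-hand heart, computed inside the abelian category $\HH_{[\UU,\TT]}$, equals $(\TT'\cap\UU^\perp)\cap(\UU'\cap\UU^\perp)^{\perp}$, where the outer perpendicular is taken in $\HH_{[\UU,\TT]}$. The inclusion $\subseteq$ is immediate from $\UU'\supseteq\UU$ together with fullness of $\HH_{[\UU,\TT]}$ inside $\AA$. For the reverse inclusion, given $Y$ in the right-hand side and $U\in\UU'$, I would decompose $U$ via the torsion triangle $U_\UU\to U\to U/U_\UU$ for the torsion pair attached to $\UU$; since $\UU'$ is a torsion class, $U/U_\UU\in\UU'\cap\UU^\perp$, so any map $U\to Y$ factors through $U/U_\UU$ and must vanish by the perpendicular hypothesis, yielding $Y\in(\UU')^\perp$.

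Finally, for (1), I would use (2) to translate cover arrows in $\tors\AA$ out of $\TT$ landing inside $[\UU,\TT]$ into cover arrows in $\tors\HH_{[\UU,\TT]}$ out of its top element. Assuming (a), $\HH_{[\UU,\TT]}$ is a wide abelian category, and these cover arrows correspond to the simple objects (bricks) of $\HH_{[\UU,\TT]}$; the meet of the associated covers is $0$ in $\tors\HH_{[\UU,\TT]}$, which transports back via (2) to the meet equation for $\UU$ in (b). For the converse (b)$\Rightarrow$(a), I would invoke the brick labeling of $\Hasse(\tors\AA)$ as developed in \cite{AP}: the meet condition forces the existence of sufficiently many bricks to generate $\HH_{[\UU,\TT]}$ as a wide subcategory, in particular ensuring closure under kernels of morphisms in $\AA$.

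The main obstacle will be (b)$\Rightarrow$(a) in part (1), where the categorical closure-under-kernels property of the heart must be extracted from a purely lattice-theoretical meet condition. This is the content of \cite[Theorem 5.2]{AP} and rests on a careful analysis of brick labelings of arrows in $\Hasse(\tors\AA)$, which is why I would establish (a)$\Rightarrow$(b) first using (2) and then invoke that deeper analysis for the converse.
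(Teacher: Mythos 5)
The paper does not actually prove this proposition: it is a recollection of known results, with part (1) cited from \cite[Theorem 5.2]{AP}, part (2) from \cite[Theorem 4.2]{AP}, and part (3) from \cite[Lemma 2.9]{ES}. Your proposal therefore takes a genuinely different route simply by attempting the arguments. Your treatment of (2) and (3) is sound and matches the standard proofs: the verification that $\CC\cap\UU^{\perp}$ is a torsion class in $\HH_{[\UU,\TT]}$, mutual inverseness via the canonical sequence $0\to U_C\to C\to C/U_C\to 0$ attached to $\UU$, and, for (3), the observation that a morphism $U\to Y$ with $U\in\UU'$ and $Y\in\UU^{\perp}$ kills the torsion part $U_{\UU}$ and factors through $U/U_{\UU}\in\UU'\cap\UU^{\perp}\subseteq\HH_{[\UU,\TT]}$, so that the two perpendiculars agree. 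Two caveats are worth recording. First, your (a)$\Rightarrow$(b) argument in (1) rests on the fact that the torsion classes covered by the top element of $\tors\HH_{[\UU,\TT]}$ are exactly the ${}^{\perp}S$ for $S$ simple in $\HH_{[\UU,\TT]}$ and that these intersect to zero; this is correct but uses that $\HH_{[\UU,\TT]}$ is an abelian \emph{length} category, a hypothesis not covered by the paper's blanket convention that $\AA$ is merely abelian (though it holds in every application the paper makes, and is the setting of \cite{AP}). Second, for (b)$\Rightarrow$(a) you invoke \cite[Theorem 5.2]{AP}, which is the statement being proved, so that direction is attributed rather than established in your proposal --- which, to be fair, is exactly how the paper itself treats the entire proposition.
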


Note that wide intervals in $\tors\AA$ is characterized as a lattice-theoretical property in $\tors\AA$. The operation $\alpha$ defined in Proposition \ref{prop:alpha} is understood from the viewpoint of wide intervals.

\begin{proposition}\label{prop:tors}
Let $\TT$ be a torsion class in $\AA$. Then the following statements hold.
\begin{enumerate}
\item \cite[Proposition 6.3]{AP}  $\alpha\TT$ equals to the heart of the interval $[\TT\cap {}^{\perp}\alpha\TT, \TT]$. 
\item \cite[Proposition 3.3]{ES} We set  
\[
\TT^-=\TT\bigcap\{\CC\in\tors\AA\mid \text{there is an arrow} \ \TT\to\CC \ \text{in} \ \Hasse(\tors\AA)\}.
\]
Then we have $\TT^-=\TT\cap {}^{\perp}\alpha\TT$ and $\HH_{[\TT^-,\TT]}=\alpha\TT$.
\end{enumerate}
\end{proposition}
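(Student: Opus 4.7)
The plan is to treat both parts in parallel, setting $\UU := \TT \cap {}^\perp\alpha\TT$ and aiming to verify that $\UU \in \tors\AA$, that $\HH_{[\UU,\TT]} = \alpha\TT$, and that $\UU = \TT^-$.

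First I would check that $\UU$ is a torsion class in $\AA$. Closure under quotients follows because $\TT$ is quotient-closed and, for any quotient $Y \twoheadrightarrow Z$ with $Y \in \UU$, $\Hom(Z, W)$ embeds into $\Hom(Y, W) = 0$ for every $W \in \alpha\TT$; closure under extensions is analogous via left-exactness of $\Hom(-, W)$. This puts $\UU$ into $\tors\AA$ with $\UU \se \TT$.

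For Part (1), the inclusion $\alpha\TT \se \TT \cap \UU^\perp$ is immediate from the construction of $\UU$. The reverse inclusion $\TT \cap \UU^\perp \se \alpha\TT$ is the main obstacle, and I would handle it by exhibiting $(\UU, \alpha\TT)$ as a torsion pair inside the exact subcategory $\TT$ of $\AA$. The orthogonality $\Hom_\AA(\UU, \alpha\TT) = 0$ is built into the definition of $\UU$; for each $X \in \TT$ I would construct the torsion subobject $X_\UU \se X$ as the sum of all $\UU$-subobjects of $X$ (well-defined under the length-category hypothesis of [AP, ES]) and check that the quotient $X/X_\UU$ satisfies the kernel-closure property defining $\alpha\TT$, by combining the wideness of $\alpha\TT$ from Proposition \ref{prop:alpha}(2) with the maximality of $X_\UU$. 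Once the torsion pair is in place, any $A \in \TT \cap \UU^\perp$ has $A_\UU = 0$, hence $A = A/A_\UU \in \alpha\TT$.

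For Part (2), once Part (1) is in hand, the heart $\HH_{[\UU,\TT]} = \alpha\TT$ is a wide subcategory, so $[\UU, \TT]$ is a wide interval; by Proposition \ref{prop:torslattice}(1) it is therefore a meet interval, so $\UU$ equals the intersection of those arrow targets $\CC$ in $\Hasse(\tors\AA)$ with $\TT \cov \CC$ and $\UU \se \CC$. Combining this with the classical description of Hasse arrows---each arrow $\TT \cov \CC$ is of the form $\CC = \TT \cap {}^\perp S$ for a simple $S$ of $\alpha\TT$, which yields ${}^\perp\alpha\TT \se {}^\perp S$ and hence $\UU \se \CC$ for every such arrow target---I obtain $\UU \se \TT^-$; the reverse $\TT^- \se \UU$ is immediate since $\TT^-$ is the meet over a larger family. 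Thus $\UU = \TT^-$ and $\HH_{[\TT^-,\TT]} = \alpha\TT$.

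The main obstacle is the torsion-pair construction inside $\TT$ needed for the reverse inclusion in Part (1): the nontrivial point is verifying that $X/X_\UU$ really lies in $\alpha\TT$, which needs a careful combination of the wideness of $\alpha\TT$, the maximality of $X_\UU$, and the length-category structure to ensure that the kernel of any $\TT$-morphism into $X/X_\UU$ stays inside $\TT$.
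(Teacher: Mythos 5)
First, note that the paper does not prove this proposition at all: both parts are quoted from the literature, (1) from \cite[Proposition 6.3]{AP} and (2) from \cite[Proposition 3.3]{ES}, so there is no internal argument to compare yours against; your proposal has to stand on its own as a proof of those cited results.

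As it stands it does not, because the single step carrying all the content of part (1) is left unproved. Setting $\UU=\TT\cap{}^{\perp}\alpha\TT$, your preliminary observations are fine: $\UU$ is a torsion class (both ${}^{\perp}\alpha\TT$ and $\TT$ are closed under quotients and extensions), and $\alpha\TT\se\TT\cap\UU^{\perp}$ is immediate. But the reverse inclusion $\TT\cap\UU^{\perp}\se\alpha\TT$ is exactly the theorem, and your plan --- take the trace $X_{\UU}\se X$ and ``check that $X/X_{\UU}$ satisfies the kernel-closure property defining $\alpha\TT$ by combining the wideness of $\alpha\TT$ with the maximality of $X_{\UU}$'' --- does not indicate how that check would go. Wideness of $\alpha\TT$ (Proposition \ref{prop:alpha}) says nothing about membership of a given object in $\alpha\TT$, and maximality of $X_{\UU}$ gives you a contradiction only if a failure of kernel-closure for $X/X_{\UU}$ can be converted into a strictly larger subobject of $X$ lying in $\UU={}\TT\cap{}^{\perp}\alpha\TT$; a morphism $g\colon T\to X/X_{\UU}$ with $\ker g\notin\TT$ produces subobjects of $X/X_{\UU}$ that lie in $\TT$, but there is no visible reason they should lie in ${}^{\perp}\alpha\TT$. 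You flag this yourself as ``the main obstacle,'' which is an accurate self-assessment: this is precisely where the work of \cite[Proposition 6.3]{AP} lives (their argument goes through the lattice-theoretic analysis of lower covers and the brick labels of \cite{DIRRT}, not through a naive trace-maximality argument). Your part (2) is in better shape: granting part (1), the deduction via Proposition \ref{prop:torslattice}(1) together with the fact that every Hasse arrow $\TT\to\CC$ has the form $\CC=\TT\cap{}^{\perp}S$ for $S$ a simple of $\alpha\TT$ is essentially the route taken in \cite{ES}, though you should be aware that this last fact is itself a nontrivial input from the brick-labelling theory and is logically close to part (1), so one must take care not to argue circularly.
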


We introduce the following notion.

\begin{definition}\label{def:max}
We call an interval of the form $[\TT^-,\TT]$ a \emph{maximal meet interval} in $\tors\AA$. More generally, we call an interval $[\UU',\TT']$ contained in a wide interval $[\UU,\TT]$ in $\tors\AA$ a \emph{maximal meet interval} in $[\UU,\TT]$ if we have
\[
\UU'=\TT'\bigcap\{\CC\in[\UU,\TT]\mid \text{there is an arrow} \ \TT'\to\CC \ \text{in} \ \Hasse(\tors\AA)\}.
\]
\end{definition}

\begin{proposition}\label{prop:meet}
Let $[\UU',\TT']$ be an interval in a wide interval $[\UU,\TT]$ in $\tors\AA$. Then $[\UU',\TT']$ is a maximal wide interval in $[\UU,\TT]$ if and only if $[\UU'\cap\UU^{\perp},\TT'\cap\UU^{\perp}]$ is a maximal meet interval in $\tors\HH_{[\UU,\TT]}$. In this case, it holds $\HH_{[\UU',\TT']}=\alpha(\TT'\cap\UU^\perp)$. 
\end{proposition}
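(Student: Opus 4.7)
The plan is to exploit the lattice isomorphism $(-)\cap\UU^{\perp}\colon[\UU,\TT]\to\tors\HH_{[\UU,\TT]}$ from Proposition \ref{prop:torslattice} (2) and the observation that being a maximal meet interval is a purely lattice-theoretic property, phrased in terms of arbitrary meets and the Hasse quiver.

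The first step is to note that any lattice isomorphism preserves covering relations, so the isomorphism of (2) identifies $\Hasse(\tors\HH_{[\UU,\TT]})$ with the full subquiver of $\Hasse(\tors\AA)$ on the vertex set $[\UU,\TT]$. I would further remark that this subquiver coincides with the Hasse quiver of the interval $[\UU,\TT]$ viewed as a lattice on its own, since any torsion class strictly between two elements of $[\UU,\TT]$ is automatically forced to lie in $[\UU,\TT]$. Consequently, arrows $\TT'\to\CC$ in $\Hasse(\tors\AA)$ with $\CC\in[\UU,\TT]$ correspond bijectively via $(-)\cap\UU^{\perp}$ to arrows $\TT'\cap\UU^{\perp}\to\CC\cap\UU^{\perp}$ in $\Hasse(\tors\HH_{[\UU,\TT]})$.

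Next I would translate the definition directly. Applying the isomorphism $(-)\cap\UU^{\perp}$, which preserves arbitrary meets, to the defining identity
\[
\UU'=\TT'\cap\bigcap\{\CC\in[\UU,\TT]\mid\TT'\to\CC\text{ in }\Hasse(\tors\AA)\}
\]
produces
\[
\UU'\cap\UU^{\perp}=(\TT'\cap\UU^{\perp})\cap\bigcap\{\DD\in\tors\HH_{[\UU,\TT]}\mid(\TT'\cap\UU^{\perp})\to\DD\text{ in }\Hasse(\tors\HH_{[\UU,\TT]})\},
\]
which is precisely the condition that $[\UU'\cap\UU^{\perp},\TT'\cap\UU^{\perp}]$ be a maximal meet interval in $\tors\HH_{[\UU,\TT]}$. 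Since $(-)\cap\UU^{\perp}$ is bijective with inverse $\UU*(-)$, the converse direction is formal, and the equivalence follows.

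For the last assertion, I would combine Proposition \ref{prop:torslattice} (3) with Proposition \ref{prop:tors} (2). The former gives $\HH_{[\UU',\TT']}=\HH_{[\UU'\cap\UU^{\perp},\TT'\cap\UU^{\perp}]}$, with the right-hand side computed inside the abelian category $\HH_{[\UU,\TT]}$. Since $[\UU'\cap\UU^{\perp},\TT'\cap\UU^{\perp}]$ is a maximal meet interval in $\tors\HH_{[\UU,\TT]}$, it is of the form $[(\TT'\cap\UU^{\perp})^-,\TT'\cap\UU^{\perp}]$, so applying Proposition \ref{prop:tors} (2) inside $\HH_{[\UU,\TT]}$ yields $\HH_{[(\TT'\cap\UU^{\perp})^-,\TT'\cap\UU^{\perp}]}=\alpha(\TT'\cap\UU^{\perp})$, as required. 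The only genuinely nontrivial point is the identification of the restricted Hasse quiver under the lattice isomorphism; once that is settled, everything else is a formal manipulation of the quoted propositions.
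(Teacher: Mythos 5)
Your proof is correct and follows essentially the same route as the paper: the equivalence is deduced from the lattice isomorphism of Proposition \ref{prop:torslattice} (2), and the final identity from Proposition \ref{prop:torslattice} (3) together with Proposition \ref{prop:tors} (2) applied inside $\HH_{[\UU,\TT]}$. The paper compresses the first part into one sentence, so your explicit check that the isomorphism matches covering relations (and that covers inside $[\UU,\TT]$ computed in $\Hasse(\tors\AA)$ agree with those of the interval lattice) is just a welcome elaboration of the same argument.
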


\begin{proof}
The former part follows from a lattice isomorphism in Proposition \ref{prop:torslattice} (2). 
By Proposition \ref{prop:torslattice} (3), we have $\HH_{[\UU',\TT']}=\HH_{[\UU'\cap\UU^\perp,\TT'\cap\UU^\perp]}$. Since $[\UU'\cap\UU^\perp,\TT'\cap\UU^\perp]$ is a maximal meet interval in $\tors\HH_{[\UU,\TT]}$, we have $\HH_{[\UU'\cap\UU^\perp,\TT'\cap\UU^\perp]}=\alpha(\TT'\cap\UU^\perp)$ by Proposition \ref{prop:tors} (2). 
\end{proof}

We call a sequence $\{[\UU_k, \TT_k]\}_{k=1}^{n}$ of intervals in $\tors\AA$ a \emph{decreasing sequence of maximal meet intervals} in $\tors\AA$ provided that $[\UU_{k+1}, \TT_{k+1}]$ is a maximal meet interval in $[\UU_k, \TT_k]$ for any $k=0,\ldots, n-1$ where we set $\UU_0=0$ and $\TT_0=\AA$. We call $n$ the \emph{length} of the sequence. The following gives a description of ICE sequences via maximal meet intervals. 

\begin{theorem}\label{thm:wideint}
Let $\AA$ be an abelian length category. Then there exist bijective correspondences between
\begin{enumerate}
\item the set of decreasing sequences of maximal meet intervals in $\tors\AA$ of length $n$, 
\item the set of ICE sequences $\{\CC(k)\}_{k\in\Z}$ in $\AA$ of length $n+1$. 
\end{enumerate}
\end{theorem}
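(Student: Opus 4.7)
The plan is to construct the bijection level-by-level using the lattice isomorphism of Proposition~\ref{prop:torslattice}(2) together with the heart identification in Proposition~\ref{prop:meet}. Throughout, set $[\UU_0,\TT_0]:=[0,\AA]$, so that $\HH_{[\UU_0,\TT_0]}=\AA$.

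For the forward map, given a decreasing sequence $\{[\UU_k,\TT_k]\}_{k=1}^n$ of maximal meet intervals, I define $\CC(k):=\AA$ for $k\leq -n$, $\CC(k):=0$ for $k\geq 1$, and
\[
 \CC(-n+k):=\TT_k\cap\UU_{k-1}^\perp\quad\text{for }k=1,\ldots,n.
\]
By Proposition~\ref{prop:torslattice}(2), each $\CC(-n+k)$ is a torsion class in the wide subcategory $\HH_{[\UU_{k-1},\TT_{k-1}]}$. Applying Proposition~\ref{prop:meet} inductively, this heart coincides with $\alpha(\CC(-n+k-1))$ (with the base case $\HH_{[\UU_0,\TT_0]}=\AA$), so $\CC(-n+k)$ is a torsion class in $\alpha(\CC(-n+k-1))$, making $\{\CC(k)\}_{k\in\Z}$ an ICE sequence of length $n+1$.

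For the inverse, given an ICE sequence of length $n+1$, I construct $[\UU_k,\TT_k]$ by induction on $k$. Suppose $[\UU_{k-1},\TT_{k-1}]$ has been built with heart $\alpha(\CC(-n+k-1))$. Then $\CC(-n+k)$ is a torsion class in this heart, and by Proposition~\ref{prop:torslattice}(2) it corresponds to a subinterval of $[\UU_{k-1},\TT_{k-1}]$ with upper endpoint $\TT_k:=\UU_{k-1}*\CC(-n+k)$; I let $[\UU_k,\TT_k]$ be the maximal meet interval in $[\UU_{k-1},\TT_{k-1}]$ having this upper endpoint. Proposition~\ref{prop:meet} then identifies $\HH_{[\UU_k,\TT_k]}$ with $\alpha(\TT_k\cap\UU_{k-1}^\perp)=\alpha(\CC(-n+k))$, so the induction proceeds. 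The two constructions are mutually inverse at each level: starting from intervals, the lattice isomorphism gives $\UU_{k-1}*(\TT_k\cap\UU_{k-1}^\perp)=\TT_k$, and the maximal-meet condition on the original sequence pins down the matching lower endpoint $\UU_k$; conversely $\TT_k\cap\UU_{k-1}^\perp=\CC(-n+k)$ by the same isomorphism.

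The principal obstacle is purely bookkeeping across the nested lattice isomorphisms: the operator $\alpha$ has a different meaning at each level (it must be computed inside the successive hearts $\HH_{[\UU_{k-1},\TT_{k-1}]}$ rather than in $\AA$), and Proposition~\ref{prop:meet} must be reinvoked at each inductive step to propagate the heart identification. Once these compatibilities are tracked carefully, every verification reduces to a direct application of Propositions~\ref{prop:torslattice}(2), \ref{prop:tors}(2), and \ref{prop:meet}.
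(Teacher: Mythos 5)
Your proposal is correct and follows essentially the same route as the paper: the forward map is the paper's Proposition \ref{prop:fullICE} (up to the harmless re-indexing that the paper handles by the shift $\{\CC(k)\}\mapsto\{\CC(k-n)\}$), the inverse is the inductive construction of Propositions \ref{prop:con} and \ref{prop:dec}, and mutual inverseness is checked via the lattice isomorphism of Proposition \ref{prop:torslattice}(2) with the heart identification propagated by Proposition \ref{prop:meet} at each level, exactly as in the paper.
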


\begin{proof}
Considering the map $\{\CC(k)\}_{k\in\Z}\mapsto\{\CC(k-n)\}_{k\in\Z}$, there is a bijection between {\rm (2)} and the set {\rm (2)'} of ICE sequences $\{\CC(k)\}_{k\in\Z}$ satisfying $\CC(0)=\AA$ and $\CC(n+1)=0$. Then the maps between {\rm (1)} and {\rm (2)'} are given by Propositions \ref{prop:fullICE} and \ref{prop:dec}. It can be checked that these are mutually inverse of each other by Proposition \ref{prop:torslattice} (2). 
\end{proof}

\begin{proposition}\label{prop:fullICE}
Let $\{[\UU_k, \TT_k]\}_{k=1}^{n}$ be a decreasing sequence of maximal meet intervals in $\tors\AA$. We define subcategories of $\AA$ by
\[
    \CC(k)=\left\{ \begin{array}{ll}
      \AA & (k \leq 0) \\
      \TT_k\cap\UU_{k-1}^{\perp} & (1 \leq k\leq n) \\
      0 & (n+1\leq k) \\
    \end{array} \right. 
\]
where we put $\UU_{0}=0$ and $\TT_{0}=\AA$. Then $\{\CC(k)\}_{k\in\Z}$ is an ICE sequence in $\AA$. 
\end{proposition}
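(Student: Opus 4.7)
\medskip
\noindent\textbf{Proof plan.} The idea is to invoke the lattice isomorphism of Proposition \ref{prop:torslattice} (2) together with the identification $\HH_{[\UU',\TT']} = \alpha(\TT' \cap \UU^\perp)$ from Proposition \ref{prop:meet}; the main bookkeeping is to track which heart of a wide interval plays the role of the ambient abelian category at each step.

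First, I would establish by induction on $k$ that each $[\UU_k, \TT_k]$ for $0 \leq k \leq n$ is a wide interval in $\tors\AA$. The base case $[\UU_0, \TT_0] = [0, \AA]$ has heart $\AA$, hence is wide. For the inductive step, since $[\UU_{k+1}, \TT_{k+1}]$ is by hypothesis a maximal meet interval in the wide interval $[\UU_k, \TT_k]$, Proposition \ref{prop:meet} gives $\HH_{[\UU_{k+1}, \TT_{k+1}]} = \alpha(\TT_{k+1} \cap \UU_k^\perp)$, which is a wide subcategory of $\HH_{[\UU_k, \TT_k]}$ by Proposition \ref{prop:alpha}, hence a wide subcategory of $\AA$. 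This grants access to Propositions \ref{prop:torslattice} (2) and \ref{prop:meet} at each level.

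Using this, I would verify the two axioms of an ICE sequence. For ICE-closedness of $\CC(k)$, the cases $k \leq 0$ and $k \geq n+1$ are immediate; for $1 \leq k \leq n$, the lattice isomorphism of Proposition \ref{prop:torslattice} (2) applied to the wide interval $[\UU_{k-1}, \TT_{k-1}]$ sends $\TT_k$ to $\CC(k) = \TT_k \cap \UU_{k-1}^\perp$, exhibiting $\CC(k)$ as a torsion class in the wide subcategory $\HH_{[\UU_{k-1}, \TT_{k-1}]}$ of $\AA$; this is ICE-closed in $\AA$ by \cite[Lemma 2.2]{ES}. For the axiom that $\CC(k+1)$ is a torsion class in $\alpha\CC(k)$, the boundary cases $k \leq -1$ and $k \geq n$ are trivial. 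For $1 \leq k \leq n-1$, Proposition \ref{prop:meet} identifies $\alpha\CC(k) = \alpha(\TT_k \cap \UU_{k-1}^\perp) = \HH_{[\UU_k, \TT_k]}$, and the lattice isomorphism applied to the wide interval $[\UU_k, \TT_k]$ sends $\TT_{k+1}$ to $\CC(k+1) = \TT_{k+1} \cap \UU_k^\perp$, a torsion class in $\HH_{[\UU_k, \TT_k]} = \alpha\CC(k)$. The remaining case $k = 0$ is handled directly: $\alpha\CC(0) = \alpha\AA = \AA$, and $\CC(1) = \TT_1$ is a torsion class in $\AA$ since $\TT_1 \in \tors\AA$.

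The main obstacle is purely organizational, namely ensuring that Proposition \ref{prop:meet} is applied within the correct ambient wide subcategory at each step; this is exactly why the inductive wideness of all $[\UU_k, \TT_k]$ is established first. Once that is in hand, both ICE-sequence axioms collapse to a single application of the lattice isomorphism together with a single application of Proposition \ref{prop:meet} per level.
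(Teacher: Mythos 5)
Your proof is correct and follows essentially the same route as the paper: reduce everything to showing $\CC(k+1)$ is a torsion class in $\alpha(\CC(k))$ via \cite[Lemma 2.2]{ES}, then apply the lattice isomorphism of Proposition \ref{prop:torslattice} (2) together with the identification $\HH_{[\UU_k,\TT_k]}=\alpha(\CC(k))$ from Proposition \ref{prop:meet}. Your explicit induction showing each $[\UU_k,\TT_k]$ is a wide interval is a point the paper leaves implicit, and making it explicit is a reasonable improvement rather than a deviation.
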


\begin{proof}
By \cite[Lemma 2.2]{ES}, it is enough to show the following claim: $\CC(k+1)$ is a torsion class in $\alpha(\CC(k))$ for $k=0,\ldots, n-1$. Clearly, the claim holds for $k=0$. Suppose $k>0$. By $\TT_{k+1}\se\TT_k$ and $\UU_k\se\UU_{k+1}$, it holds $\CC(k+1)\se\CC(k)$. By $\TT_{k+1}\in[\UU_k,\TT_k]$ and Proposition \ref{prop:torslattice} (2), the subcategory $\CC(k+1)=\TT_{k+1}\cap\UU_{k}^{\perp}$ is a torsion class in $\HH_{[\UU_k,\TT_k]}$. By Proposition \ref{prop:meet}, we have
\[
\HH_{[\UU_k,\TT_k]}=\alpha(\TT_k\cap\UU_{k-1}^\perp)=\alpha(\CC(k)).
\]
Thus $\CC(k+1)$ is a torsion class in $\alpha(\CC(k))$.
\end{proof}

\begin{proposition}\label{prop:con}
Let $[\UU,\TT]$ be a wide interval in $\tors\AA$ and $\CC$ a torsion class in $\HH_{[\UU,\TT]}$. Then there is a maximal meet interval $[\UU',\TT']$ in $[\UU,\TT]$ such that $\TT'\cap\UU^\perp=\CC$ and $\HH_{[\UU',\TT']}=\alpha\CC$.
\end{proposition}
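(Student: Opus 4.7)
The plan is to reduce the statement to Proposition \ref{prop:tors}(2) applied inside the abelian category $\HH_{[\UU,\TT]}$, by transporting the construction back along the lattice isomorphism of Proposition \ref{prop:torslattice}(2). Concretely, Proposition \ref{prop:tors}(2) handles the case $[\UU,\TT]=[0,\AA]$, and all we need to do is upgrade it to an arbitrary wide interval using the bijections $(-)\cap\UU^\perp$ and $\UU*(-)$.

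First I would set $\TT':=\UU*\CC$, so that $\TT'\in[\UU,\TT]$ and $\TT'\cap\UU^\perp=\CC$ by the lattice isomorphism. Next, regarding $\CC$ as a torsion class in the abelian category $\HH_{[\UU,\TT]}$, I would apply Proposition \ref{prop:tors}(2) there to obtain $\CC^-\in\tors\HH_{[\UU,\TT]}$ with the property that $[\CC^-,\CC]$ is a maximal meet interval in $\tors\HH_{[\UU,\TT]}$ and $\HH_{[\CC^-,\CC]}=\alpha\CC$. I would then set $\UU':=\UU*\CC^-$, so that $\UU'\in[\UU,\TT]$ and $\UU'\cap\UU^\perp=\CC^-$.

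It remains to verify that $[\UU',\TT']$ is a maximal meet interval in $[\UU,\TT]$ in the sense of Definition \ref{def:max}, after which $\HH_{[\UU',\TT']}=\alpha(\TT'\cap\UU^\perp)=\alpha\CC$ is immediate from Proposition \ref{prop:meet}. The key step, which I expect to be the main obstacle, is a cover-preservation comparison: because $[\UU,\TT]$ is order-convex inside $\tors\AA$, an arrow $\TT'\to\BB$ with $\BB\in[\UU,\TT]$ in $\Hasse(\tors\AA)$ coincides with a cover relation in the sublattice $[\UU,\TT]$; and since $(-)\cap\UU^\perp\colon[\UU,\TT]\to\tors\HH_{[\UU,\TT]}$ is a lattice isomorphism, these correspond bijectively to arrows $\CC\to\BB\cap\UU^\perp$ in $\Hasse(\tors\HH_{[\UU,\TT]})$. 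Transporting the defining identity
\[
\CC^-=\CC\cap\bigcap\bigl\{\BB'\in\tors\HH_{[\UU,\TT]}\mid \CC\to\BB' \text{ in }\Hasse(\tors\HH_{[\UU,\TT]})\bigr\}
\]
through the isomorphism (which preserves meets) therefore yields
\[
\UU'=\TT'\cap\bigcap\bigl\{\BB\in[\UU,\TT]\mid \TT'\to\BB \text{ in }\Hasse(\tors\AA)\bigr\},
\]
which is exactly the condition required by Definition \ref{def:max}. The remainder of the argument is a direct appeal to Proposition \ref{prop:meet}.
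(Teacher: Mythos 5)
Your proposal is correct and takes essentially the same approach as the paper: both pull the maximal meet interval $[\CC^-,\CC]$ of $\tors\HH_{[\UU,\TT]}$ back along the lattice isomorphism of Proposition \ref{prop:torslattice}(2) and identify the heart via Proposition \ref{prop:torslattice}(3). The only difference is that you spell out explicitly the transport of cover relations and meets (which the paper delegates to Proposition \ref{prop:meet}), making the construction of $\TT'=\UU*\CC$ and $\UU'=\UU*\CC^-$ concrete.
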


\begin{proof}
Take a maximal meet interval $[\CC^-,\CC]$ in $\tors\HH_{[\UU,\TT]}$. By Proposition \ref{prop:torslattice} (2), there is a maximal meet interval $[\UU',\TT']$ in $[\UU,\TT]$ such that $\TT'\cap\UU^\perp=\CC$. By Proposition \ref{prop:torslattice} (3), we have $\HH_{[\UU',\TT']}=\HH_{[\CC^-,\CC]}=\alpha\CC$. 
\end{proof}

\begin{proposition}\label{prop:dec}
Let $\{\CC(k)\}_{k\in\Z}$ be an ICE sequence in $\AA$ satisfying $\CC(0)=\AA$ and $\CC(n+1)=0$. Then there is a decreasing sequence of maximal meet intervals $\{[\UU_k,\TT_k]\}_{k=1}^{n}$ in $\tors\AA$ such that $\TT_{k}\cap\UU_{k-1}^\perp=\CC(k)$ and $\HH_{[\UU_k,\TT_k]}=\alpha(\CC(k))$ for any $k$ where $\UU_0=0$.  
\end{proposition}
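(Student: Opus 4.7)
The plan is to build the sequence $\{[\UU_k,\TT_k]\}_{k=1}^n$ by induction on $k$, using Proposition \ref{prop:con} as the lifting tool at each step. I would set $\UU_0=0$ and $\TT_0=\AA$; the interval $[\UU_0,\TT_0]$ has heart $\AA\cap 0^\perp=\AA=\CC(0)$, and directly from the definition of $\alpha$ one sees $\alpha(\AA)=\AA$, so the base identity $\HH_{[\UU_0,\TT_0]}=\alpha(\CC(0))$ holds and $[\UU_0,\TT_0]$ is trivially a wide interval.

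For the inductive step, assume a wide interval $[\UU_{k-1},\TT_{k-1}]$ has already been constructed with $\HH_{[\UU_{k-1},\TT_{k-1}]}=\alpha(\CC(k-1))$. By Definition \ref{def:ICEseq}, the subcategory $\CC(k)$ is a torsion class in $\alpha(\CC(k-1))=\HH_{[\UU_{k-1},\TT_{k-1}]}$. Applying Proposition \ref{prop:con} to this torsion class produces a maximal meet interval $[\UU_k,\TT_k]$ inside $[\UU_{k-1},\TT_{k-1}]$ with $\TT_k\cap\UU_{k-1}^\perp=\CC(k)$ and $\HH_{[\UU_k,\TT_k]}=\alpha(\CC(k))$. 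These are the two required conditions at stage $k$; the second one also propagates the inductive hypothesis to the next step and, since $\alpha(\CC(k))$ is a wide subcategory by Proposition \ref{prop:alpha}, it ensures that $[\UU_k,\TT_k]$ is itself a wide interval so that Proposition \ref{prop:con} is applicable again.

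Iterating for $k=1,\ldots,n$ yields the full decreasing sequence of maximal meet intervals. The argument is essentially a transcription of the ICE-sequence structure into the lattice-theoretic framework via Proposition \ref{prop:con}; the only point requiring vigilance is the propagation of the identity $\HH_{[\UU_{k-1},\TT_{k-1}]}=\alpha(\CC(k-1))$, but this is exactly the second clause in the output of the preceding application of Proposition \ref{prop:con}, so the induction closes cleanly. The terminal condition $\CC(n+1)=0$ plays no role in the construction itself; it only fixes the length at which the induction terminates.
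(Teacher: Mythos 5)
Your proposal is correct and follows essentially the same route as the paper: an inductive construction in which Proposition \ref{prop:con} is applied at each step to the wide interval $[\UU_{k-1},\TT_{k-1}]$ (whose heart is $\alpha(\CC(k-1))$ by the inductive hypothesis) and the torsion class $\CC(k)$ therein. The only cosmetic difference is that you fold the base case into the general step via the trivial wide interval $[0,\AA]$, whereas the paper starts directly with $\TT_1=\CC(1)$, $\UU_1=\CC(1)^-$; these produce the same sequence.
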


\begin{proof}
We construct $\{[\UU_k,\TT_k]\}_{k=1}^{n}$ inductively. Put $\TT_1=\CC(1)$ and $\UU_1=\CC(1)^-$. Suppose that we have $[\UU_k,\TT_k]$ such that $\HH_{[\UU_k,\TT_k]}=\alpha(\CC(k))$. Applying Proposition \ref{prop:con} to a wide interval $[\UU_k,\TT_k]$ and a torsion class $\CC(k+1)$ in $\HH_{[\UU_k,\TT_k]}$, we obtain a maximal wide interval $[\UU_{k+1},\TT_{k+1}]$ in $[\UU_k,\TT_k]$ such that $\TT_{k+1}\cap\UU_k^\perp=\CC(k+1)$ and $\HH_{[\UU_{k+1},\TT_{k+1}]}=\alpha(\CC(k+1))$.
\end{proof}

Now we establish a classification of $(n+1)$-intermediate $t$-structures on $D^b(\mod\Lambda)$ whose aisles are homology-determined for a $\tau$-tilting finite algebra $\Lambda$. A finite dimensional algebra $\Lambda$ is \emph{$\tau$-tilting finite} if every torsion class in $\mod\Lambda$ is functorially finite, see \cite[Theorem 3.8]{DIJ}. Then, by \cite[Proposition 4.20]{ES}, every ICE-closed subcategory of $\mod\Lambda$ is functorially finite, in particular contravariantly finite. For any positive integer $n$, we obtain a classification of $(n+1)$-intermediate $t$-structures whose aisles are homology-determined via ICE sequences and the lattice of torsion classes:

\begin{theorem}\label{thm:class}
Let $\Lambda$ be a $\tau$-tilting finite algebra and $\tors\Lambda$ the lattice consisting of torsion classes in $\mod\Lambda$. Then there are one-to-one correspondences between
\begin{enumerate}
\item the set of $(n+1)$-intermediate $t$-structures on $D^b(\mod\Lambda)$ whose aisles are homology-determined,
\item the set of ICE sequences in $\mod\Lambda$ of length $n+1$,
\item the set of decreasing sequences of maximal meet intervals in $\tors\Lambda$ of length $n$,
\end{enumerate}
\end{theorem}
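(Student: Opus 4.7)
The plan is to obtain the theorem as a clean combination of two bijections already established in the paper, specialised to $\AA=\mod\Lambda$ and simplified by the $\tau$-tilting finite hypothesis. The bijection $(2)\leftrightarrow(3)$ requires no new work: $\mod\Lambda$ is an abelian length category, so Theorem \ref{thm:wideint} directly yields the bijection between decreasing sequences of maximal meet intervals in $\tors\Lambda$ of length $n$ and ICE sequences in $\mod\Lambda$ of length $n+1$.

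For the bijection $(1)\leftrightarrow(2)$, I would first check that the standing assumptions of Section \ref{sec:cla} are in force. Take $\DD=D^b(\mod\Lambda)$ equipped with its standard $t$-structure $(\DD^{\leq 0},\DD^{\geq 1})$; this $t$-structure is bounded and algebraic, and its heart $\mod\Lambda$ is contravariantly finite in $\DD$ by \cite[Corollary 2.12]{CPP}. Corollary \ref{cor:int} then provides a bijection between contravariantly finite ICE sequences in $\mod\Lambda$ of length $n+1$ and $(n+1)$-intermediate $t$-structures on $D^b(\mod\Lambda)$ whose aisles are homology-determined.

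The final step is to delete the adjective \emph{contravariantly finite} from the left-hand side. Because $\Lambda$ is $\tau$-tilting finite, \cite[Theorem 3.8]{DIJ} guarantees that every torsion class in $\mod\Lambda$ is functorially finite, and \cite[Proposition 4.20]{ES} upgrades this to the statement that every ICE-closed subcategory of $\mod\Lambda$ is functorially finite. In particular the contravariant finiteness condition on each term of any ICE sequence is automatic, so the bijection provided by Corollary \ref{cor:int} restricts to $(1)\leftrightarrow(2)$. Composing with the bijection $(2)\leftrightarrow(3)$ from Theorem \ref{thm:wideint} yields the three-way correspondence. No serious obstacle is expected; the only subtlety lies in locating the correct citations from \cite{DIJ,ES,CPP} which together make the contravariant finiteness condition redundant and ensure the standing hypotheses of Section \ref{sec:cla} hold for the standard $t$-structure on $D^b(\mod\Lambda)$.
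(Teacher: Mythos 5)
Your proposal is correct and follows essentially the same route as the paper: the author also deduces the theorem by combining Corollary \ref{cor:int} with Theorem \ref{thm:wideint}, having noted just beforehand that $\tau$-tilting finiteness makes every ICE-closed subcategory of $\mod\Lambda$ functorially finite via \cite[Theorem 3.8]{DIJ} and \cite[Proposition 4.20]{ES}, so the contravariant finiteness condition is automatic. Your additional check that the standard $t$-structure on $D^b(\mod\Lambda)$ satisfies the standing hypotheses of Section \ref{sec:cla} matches the discussion given there.
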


\begin{proof}
This follows from Corollary \ref{cor:int} and Theorem \ref{thm:wideint}.
\end{proof}

By Proposition \ref{prop:alg}, it follows that for any bounded $t$-structure $(\UU,\VV)$ on $D^b(\mod\Lambda)$, there exist $n,m\in\Z$ such that $(\UU[m],\VV[m])$ is $(n+1)$-intermediate $t$-structure. Hence we can deal with all bounded $t$-structures on $D^b(\mod\Lambda)$ whose aisles are homology-determined by using decreasing sequences of maximal meet intervals in $\tors\Lambda$. 

\begin{remark}
The following conditions are equivalent for a finite dimensional algebra $\Lambda$.
\begin{enumerate}
\item Every ICE-closed subcategory of $\mod\Lambda$ is contravariantly finite.
\item Every wide subcategory of $\mod\Lambda$ is functorially finite. 
\end{enumerate}
This follows easily from \cite[Proposition 4.12]{Eno} and \cite[Corollary 3.5]{ES}. Theorem \ref{thm:class} holds under the condition (1) even without the $\tau$-tilting finiteness of $\Lambda$. On the other hand, the condition (2) is considered in \cite[Questions 5.11]{AS}. We do not know whether the above conditions are equivalent to that $\Lambda$ is $\tau$-tilting finite.
\end{remark}

\begin{example}
Let $K$ be a field and $KQ$ a path algebra for $Q\colon 1\ot 2\ot 3$.   The Auslander-Reiten quiver of $\mod KQ$ is as follows.
  \[
  \begin{tikzpicture}[scale=.6]
   \node (1) at (0,0) {$\sst{1}$};
   \node (21) at (1,1) {$\sst{2 \\ 1}$};
   \node (2) at (2,0) {$\sst{2}$};
   \node (32) at (3,1) {$\sst{3 \\ 2}$};
   \node (3) at (4,0) {$\sst{3}$};
   \node (321) at (2,2) {$\sst{3 \\ 2 \\ 1}$};

   \draw[->] (1) -- (21);
   \draw[->] (21) -- (2);
   \draw[->] (2) -- (32);
   \draw[->] (32) -- (3);
   \draw[->] (21) -- (321);
   \draw[->] (321) -- (32);

   \draw[dashed] (1) -- (2);
   \draw[dashed] (2) -- (3);
   \draw[dashed] (21) -- (32);
  \end{tikzpicture}
  \]
The Hasse quiver $\Hasse(\tors KQ)$ is depicted in Figure \ref{hasseA3}, where the vertices are the corresponding support $\tau$-tilting modules, see \cite{AIR}. 

  \begin{figure}[htp]
    \begin{tikzpicture}
      \node[block=3] (121321) at (4,6) {\nodepart{one}$\sst{1}$\nodepart{two}$\sst{2 \\ 1}$\nodepart{three}$\sst{3 \\ 2 \\ 1}$};
      \node[block=2] (121) at (6,5) {\nodepart{one}$\sst{1}$\nodepart{two}$\sst{2 \\ 1}$};
      \node[block=3] (221321) at (2,5.5) {\nodepart{one}$\sst{2}$\nodepart{two}$\sst{2 \\ 1}$\nodepart{three}$\sst{3 \\ 2 \\ 1}$};
      \node[block=3] (13321) at (2,4) {\nodepart{one}$\sst{1}$\nodepart{two}$\sst{3}$\nodepart{three}$\sst{3 \\ 2 \\ 1}$};
      \node[block=2] (221) at (4,4) {\nodepart{one}$\sst{2}$\nodepart{two}$\sst{2 \\ 1}$};
      \node[block=3] (232321) at (0,4.5) {\nodepart{one}$\sst{2}$\nodepart{two}$\sst{3 \\ 2}$\nodepart{three}$\sst{3 \\ 2 \\ 1}$};
      \node[block=2] (232) at (2,3) {\nodepart{one}$\sst{2}$\nodepart{two}$\sst{3 \\ 2}$};
      \node[block=3] (332321) at (0,3) {\nodepart{one}$\sst{3}$\nodepart{two}$\sst{3 \\ 2}$\nodepart{three}$\sst{3 \\ 2 \\ 1}$};
      \node[block=2] (13) at (4.5,2) {\nodepart{one}$\sst{1}$\nodepart{two}$\sst{3}$};
      \node[rectangle,draw] (2) at (3,2) {\nodepart{one}$\sst{2}$};
      \node[block=2] (332) at (1,2) {\nodepart{one}$\sst{3}$\nodepart{two}$\sst{3 \\ 2}$};
      \node[rectangle,draw] (3) at (2,1) {\nodepart{one}$\sst{3}$};
      \node[rectangle,draw] (1) at (6,1) {\nodepart{one}$\sst{1}$};
      \node (0) at (4,0) {$\sst{0}$};

      \draw[->] (121321) -- (121);
      \draw[->] (121321) -- (221321);
      \draw[->] (121321) -- (13321);
      \draw[->] (121) -- (221);
      \draw[->] (121) -- (1);
      \draw[->] (221321) -- (221);
      \draw[->] (221321) -- (232321);
      \draw[->] (13321) -- (332321);
      \draw[->] (13321) -- (13);
      \draw[->] (221) -- (2);
      \draw[->] (232321) -- (232);
      \draw[->] (232321) -- (332321);
      \draw[->] (232) -- (332);
      \draw[->] (232) -- (2);
      \draw[->] (332321) -- (332);
      \draw[->] (13) -- (1);
      \draw[->] (13) -- (3);
      \draw[->] (332) -- (3);
      \draw[->] (1) -- (0);
      \draw[->] (2) -- (0);
      \draw[->] (3) -- (0);
    \end{tikzpicture}
    \caption{$\Hasse(\tors KQ)$}
    \label{hasseA3}
  \end{figure}

For a support $\tau$-tilting module $T$, we denote by $\Fac T$ the torsion class corresponding to $T$. Put $\TT_1=\Fac(\sst{2}\oplus\sst{2 \\ 1}\oplus\sst{3 \\ 2 \\ 1}), \UU_1=\Fac(\sst{2}), \TT_2=\Fac(\sst{2}\oplus\sst{3 \\ 2}\oplus\sst{3 \\ 2 \\ 1})$ and $\UU_2=\Fac(\sst{2}\oplus\sst{3 \\ 2})$. Then $\{[\UU_k,\TT_k]\}_{k=1}^{2}$ is a decreasing sequence of maximal meet intervals of length $2$. The corresponding ICE sequence of length $3$ is as follows:

  \[
  \begin{tikzpicture}[scale=.6]
   \node (1) at (0,0) {$\sst{1}$};
   \node (21) at (1,1) {$\sst{2 \\ 1}$};
   \node (2) at (2,0) {$\sst{2}$};
   \node (32) at (3,1) {$\sst{3 \\ 2}$};
   \node[red] (3) at (4,0) {$\sst{3}$};
   \node[red] (321) at (2,2) {$\sst{3 \\ 2 \\ 1}$};
   \node (c) at (2,-1) {$\CC(0)$};

   \draw[->] (1) -- (21);
   \draw[->] (21) -- (2);
   \draw[->] (2) -- (32);
   \draw[->] (32) -- (3);
   \draw[->] (21) -- (321);
   \draw[->] (321) -- (32);

   \draw[dashed] (1) -- (2);
   \draw[dashed] (2) -- (3);
   \draw[dashed] (21) -- (32);
  \end{tikzpicture}
  \begin{tikzpicture}[scale=.6]
   \node (1) at (0,0) {$\sst{1}$};
   \node[red] (21) at (1,1) {$\sst{2 \\ 1}$};
   \node (2) at (2,0) {$\sst{2}$};
   \node (32) at (3,1) {$\sst{3 \\ 2}$};
   \node[red] (3) at (4,0) {$\sst{3}$};
   \node[red] (321) at (2,2) {$\sst{3 \\ 2 \\ 1}$};
   \node (c) at (2,-1) {$\alpha(\CC(-1))$};

   \draw[->] (1) -- (21);
   \draw[->] (21) -- (2);
   \draw[->] (2) -- (32);
   \draw[->] (32) -- (3);
   \draw[->] (21) -- (321);
   \draw[->] (321) -- (32);

   \draw[dashed] (1) -- (2);
   \draw[dashed] (2) -- (3);
   \draw[dashed] (21) -- (32);
  \end{tikzpicture}
  \begin{tikzpicture}[scale=.6]
   \node (1) at (0,0) {$\sst{1}$};
   \node[red] (21) at (1,1) {$\sst{2 \\ 1}$};
   \node[red] (2) at (2,0) {$\sst{2}$};
   \node[red] (32) at (3,1) {$\sst{3 \\ 2}$};
   \node[red] (3) at (4,0) {$\sst{3}$};
   \node[red] (321) at (2,2) {$\sst{3 \\ 2 \\ 1}$};
   \node (c) at (2,-1) {$\CC(-1)$};

   \draw[->] (1) -- (21);
   \draw[->] (21) -- (2);
   \draw[->] (2) -- (32);
   \draw[->] (32) -- (3);
   \draw[->] (21) -- (321);
   \draw[->] (321) -- (32);

   \draw[dashed] (1) -- (2);
   \draw[dashed] (2) -- (3);
   \draw[dashed] (21) -- (32);
  \end{tikzpicture}
  \]

The induced aisle is depicted as red vertices in the following:

\[
  \begin{tikzpicture}[scale=.8]
   \node[blackv] (20) at (2,0) {};
   \node[blackv] (22) at (2,2) {};
   \node[blackv] (31) at (3,1) {};   
   \node[black] (40) at (4,0) {$\sst{1}$};
   \node[blackv] (42) at (4,2) {};
   \node[black] (51) at (5,1) {$\sst{2 \\ 1}$};
   \node[black] (60) at (6,0) {$\sst{2}$};
   \node[red] (62) at (6,2) {$\sst{3 \\ 2 \\ 1}$};
   \node[black] (71) at (7,1) {$\sst{3 \\ 2}$};
   \node[red] (80) at (8,0) {$\sst{3}$};
   \node[blackv] (82) at (8,2) {};   
   \node[redv] (91) at (9,1) {};
   \node[redv] (100) at (10,0) {};
   \node[redv] (102) at (10,2) {};
   \node[redv] (111) at (11,1) {};
   \node[redv] (120) at (12,0) {};
   \node[redv] (122) at (12,2) {};
   \node[redv] (131) at (13,1) {};
   \node[redv] (140) at (14,0) {};
   \node[redv] (142) at (14,2) {};
   \node[redv] (151) at (15,1) {};
   \node[redv] (160) at (16,0) {};
   \node[redv] (162) at (16,2) {};

   \draw[->] (20) -- (31);
   \draw[->] (22) -- (31);
   \draw[->] (31) -- (40);
   \draw[->] (31) -- (42);
   \draw[->] (40) -- (51);
   \draw[->] (42) -- (51);
   \draw[->] (51) -- (60);
   \draw[->] (51) -- (62);
   \draw[->] (60) -- (71);
   \draw[->] (62) -- (71);
   \draw[->] (71) -- (80);
   \draw[->] (71) -- (82);
   \draw[->] (80) -- (91);
   \draw[->] (82) -- (91);
   \draw[->] (91) -- (100);
   \draw[->] (91) -- (102);
   \draw[->] (100) -- (111);
   \draw[->] (102) -- (111);
   \draw[->] (111) -- (120);
   \draw[->] (111) -- (122);
   \draw[->] (120) -- (131);
   \draw[->] (122) -- (131);
   \draw[->] (131) -- (140);
   \draw[->] (131) -- (142);
   \draw[->] (140) -- (151);
   \draw[->] (142) -- (151);
   \draw[->] (151) -- (160);
   \draw[->] (151) -- (162);

  \end{tikzpicture}
\]
\end{example}

\begin{example}
Consider the following algebra
\[
\Lambda : = K [ 1 \xleftarrow{\be} 2 \xleftarrow{\al} 3]/(\al \be).
\]
  The Auslander-Reiten quiver of $\mod \Lambda$ is as follows.
  \[
  \begin{tikzpicture}[scale=.6]
   \node (1) at (0,0) {$\sst{1}$};
   \node (21) at (1,1) {$\sst{2 \\ 1}$};
   \node (2) at (2,0) {$\sst{2}$};
   \node (32) at (3,1) {$\sst{3 \\ 2}$};
   \node (3) at (4,0) {$\sst{3}$};

   \draw[->] (1) -- (21);
   \draw[->] (21) -- (2);
   \draw[->] (2) -- (32);
   \draw[->] (32) -- (3);

   \draw[dashed] (1) -- (2);
   \draw[dashed] (2) -- (3);
  \end{tikzpicture}
  \]
The algebra $\Lambda$ is derived equivalent to $KQ$ since $\sst{1}\oplus\sst{3}\oplus\sst{3 \\ 2 \\1}$ is a tilting $KQ$-module and its endomorphism algebra is isomorphic to $\Lambda$. Thus the Auslander-Reiten quiver of $D^b(\mod\Lambda)$ is isomorphic to that of $D^b(\mod KQ)$ and depicted as follows. 

\[
  \begin{tikzpicture}[scale=.8]
   \node (11) at (1.8,1) {};
   \node[black] (20) at (2,0) {$\sst{1}$};
   \node[blackv] (22) at (2,2) {};
   \node[blackv] (31) at (3,1) {};   
   \node[blackv] (40) at (4,0) {};
   \node[black] (42) at (4,2) {$\sst{2 \\ 1}$};
   \node[black] (51) at (5,1) {$\sst{2}$};
   \node[black] (60) at (6,0) {$\sst{3 \\ 2}$};
   \node[blackv] (62) at (6,2) {};
   \node[bluev] (71) at (7,1) {};
   \node[redv] (80) at (8,0) {};
   \node[red] (82) at (8,2) {$\sst{3}$};   
   \node[redv] (91) at (9,1) {};
   \node[redv] (100) at (10,0) {};
   \node[redv] (102) at (10,2) {};
   \node[redv] (111) at (11,1) {};
   \node[redv] (120) at (12,0) {};
   \node[redv] (122) at (12,2) {};
   \node[redv] (131) at (13,1) {};   
   \node[redv] (140) at (14,0) {};
   \node[redv] (142) at (14,2) {};
   \node[redv] (151) at (15,1) {};
   \node[redv] (160) at (16,0) {};
   \node[redv] (162) at (16,2) {};

   \draw[->] (20) -- (31);
   \draw[->] (22) -- (31);
   \draw[->] (31) -- (40);
   \draw[->] (31) -- (42);
   \draw[->] (40) -- (51);
   \draw[->] (42) -- (51);
   \draw[->] (51) -- (60);
   \draw[->] (51) -- (62);
   \draw[->] (60) -- (71);
   \draw[->] (62) -- (71);
   \draw[->] (71) -- (80);
   \draw[->] (71) -- (82);
   \draw[->] (80) -- (91);
   \draw[->] (82) -- (91);
   \draw[->] (91) -- (100);
   \draw[->] (91) -- (102);
   \draw[->] (100) -- (111);
   \draw[->] (102) -- (111);
   \draw[->] (111) -- (120);
   \draw[->] (111) -- (122);
   \draw[->] (120) -- (131);
   \draw[->] (122) -- (131);
   \draw[->] (131) -- (140);
   \draw[->] (131) -- (142);
   \draw[->] (140) -- (151);
   \draw[->] (142) -- (151);
   \draw[->] (151) -- (160);
   \draw[->] (151) -- (162);

  \end{tikzpicture}
\]

The Hasse quiver $\Hasse(\tors\Lambda)$ is depicted in Figure \ref{fig:naktors}. Put $\TT_1=\Fac(\sst{2}\oplus\sst{2 \\ 1}\oplus\sst{3 \\ 2}), \UU_1=\Fac(\sst{2}), \TT_2=\Fac(\sst{2}\oplus\sst{3 \\ 2})$ and $\UU_2=\Fac(\sst{2})$. Then $\{[\UU_k,\TT_k]\}$ is a decreasing sequence of maximal meet intervals of length $2$. The corresponding ICE sequence of length $3$ is as follows:

  \[
  \begin{tikzpicture}[scale=.6]
   \node (1) at (0,0) {$\sst{1}$};
   \node (21) at (1,1) {$\sst{2 \\ 1}$};
   \node (2) at (2,0) {$\sst{2}$};
   \node (32) at (3,1) {$\sst{3 \\ 2}$};
   \node[red] (3) at (4,0) {$\sst{3}$};
   \node (c) at (2,-1) {$\CC(0)$};
   
   \draw[->] (1) -- (21);
   \draw[->] (21) -- (2);
   \draw[->] (2) -- (32);
   \draw[->] (32) -- (3);

   \draw[dashed] (1) -- (2);
   \draw[dashed] (2) -- (3);
  \end{tikzpicture}
  \begin{tikzpicture}[scale=.6]
   \node (1) at (0,0) {$\sst{1}$};
   \node[red] (21) at (1,1) {$\sst{2 \\ 1}$};
   \node (2) at (2,0) {$\sst{2}$};
   \node (32) at (3,1) {$\sst{3 \\ 2}$};
   \node[red] (3) at (4,0) {$\sst{3}$};
   \node (c) at (2,-1) {$\alpha(\CC(-1))$};
   
   \draw[->] (1) -- (21);
   \draw[->] (21) -- (2);
   \draw[->] (2) -- (32);
   \draw[->] (32) -- (3);

   \draw[dashed] (1) -- (2);
   \draw[dashed] (2) -- (3);
  \end{tikzpicture}
  \begin{tikzpicture}[scale=.6]
   \node (1) at (0,0) {$\sst{1}$};
   \node[red] (21) at (1,1) {$\sst{2 \\ 1}$};
   \node[red] (2) at (2,0) {$\sst{2}$};
   \node[red] (32) at (3,1) {$\sst{3 \\ 2}$};
   \node[red] (3) at (4,0) {$\sst{3}$};
   \node (c) at (2,-1) {$\CC(-1)$};
   
   \draw[->] (1) -- (21);
   \draw[->] (21) -- (2);
   \draw[->] (2) -- (32);
   \draw[->] (32) -- (3);

   \draw[dashed] (1) -- (2);
   \draw[dashed] (2) -- (3);
  \end{tikzpicture}
  \]
  
  The induced aisle consists of the red vertices.

  \begin{figure}[htp]
    \begin{tikzpicture}[scale = .6]
      \node (0) at (0,0) {0};
      \node[rectangle,draw] (1) at (4,2) {$\sst{1}$};
      \node[rectangle,draw] (2) at (-4,2) {$\sst{2}$};
      \node[rectangle,draw] (3) at (0,2) {$\sst{3}$};
      \node[block=2] (323) at (-3,4.5) {\nodepart{one}$\sst{3 \\ 2}$\nodepart{two}$\sst{3}$};
      \node[block=2] (221) at (0,5) {\nodepart{one}$\sst{2}$\nodepart{two}$\sst{2 \\ 1}$};
      \node[block=2] (13) at (2,4) {\nodepart{one}$\sst{1}$\nodepart{two}$\sst{3}$};
      \node[block=2] (232) at (-6,6) {\nodepart{one}$\sst{2}$\nodepart{two}$\sst{3 \\ 2}$};
      \node[block=3] (22132) at (-4,8) {\nodepart{one}$\sst{2}$\nodepart{two}$\sst{2 \\ 1}$\nodepart{three}$\sst{3 \\ 2}$};
      \node[block=3] (1323) at (0,8) {\nodepart{one}$\sst{1}$\nodepart{two}$\sst{3 \\ 2}$\nodepart{three}$\sst{3}$};
      \node[block=2] (121) at (4,8) {\nodepart{one}$\sst{1}$\nodepart{two}$\sst{2 \\ 1}$};
      \node[block=3] (12132) at (0,10) {\nodepart{one}$\sst{1}$\nodepart{two}$\sst{2 \\ 1}$\nodepart{three}$\sst{3 \\ 2}$};

      \draw[->] (12132) -- (22132);
      \draw[->] (12132) -- (1323);
      \draw[->] (12132) -- (121);
      \draw[->] (22132) -- (232);
      \draw[->] (22132) -- (221);
      \draw[->] (1323) -- (323);
      \draw[->] (1323) -- (13);
      \draw[->] (121) -- (221);
      \draw[->] (121) -- (1);
      \draw[->] (232) -- (323);
      \draw[->] (232) -- (2);
      \draw[->] (221) -- (2);
      \draw[->] (323) -- (3);
      \draw[->] (13) -- (1);
      \draw[->] (13) -- (3);
      \draw[->] (2) -- (0);
      \draw[->] (3) -- (0);
      \draw[->] (1) -- (0);
    \end{tikzpicture}
    \caption{$\Hasse(\tors \Lambda)$}
    \label{fig:naktors}
  \end{figure}
\end{example}

\begin{remark}\label{rem:coaisle}
Note that the coaisle consisting of the black and blue vertices in the previous example is not homology-determined. Indeed, the object $X$ which corresponds to the blue vertex belongs to the coaisle, but the 0-th cohomology $H^0X$ is isomorphic to $\sst{3}$ and belongs to the aisle, not the coaisle.
\end{remark}

\begin{ack}
The author would like to thank his supervisor Hiroyuki Nakaoka for his helpful discussion. The author would like to thank Osamu Iyama, Haruhisa Enomoto, Yasuaki Gyoda and Syunya Saito for their useful comments. The author is supported by JSPS KAKENHI Grant Number JP22J20611.
\end{ack}

\end{document}